\definecolor{uuuuuu}{rgb}{0.27,0.27,0.27}
\definecolor{sqsqsq}{rgb}{0.1255,0.1255,0.1255}
\newtheorem{definition}{Definition} [section]
\newtheorem{theorem}[definition]{Theorem}
\newtheorem{lemma}[definition]{Lemma}
\newtheorem{proposition}[definition]{Proposition}
\newtheorem{conjecture}[definition]{Conjecture}
\newtheorem{claim}[definition]{Claim}
\newtheorem{observation}[definition]{Observation}
\begin{document}
\title{\bf\Large A note on hypergraphs without non-trivial intersecting subgraphs}

\date{\today}

\author{Xizhi Liu\thanks{Department of Mathematics, Statistics, and Computer Science, University of Illinois, Chicago, IL, 60607 USA.
Email: xliu246@uic.edu.
Research partially supported by NSF award DMS-1763317.}
}
\maketitle
\begin{abstract}
A hypergraph $\mathcal{F}$ is non-trivial intersecting if every two edges in it have a nonempty intersection
but no vertex is contained in all edges of $\mathcal{F}$.
Mubayi and Verstra\"{e}te showed that for every $k \ge d+1 \ge 3$ and $n \ge (d+1)n/d$ every $k$-graph
$\mathcal{H}$ on $n$ vertices without a non-trivial intersecting subgraph of size $d+1$ contains at most $\binom{n-1}{k-1}$
edges. They conjectured that the same conclusion holds for all $d \ge k \ge 4$ and sufficiently large $n$.
We confirm their conjecture by proving a stronger statement.

They also conjectured that for $m \ge 4$ and sufficiently large $n$
the maximum size of a $3$-graph on $n$ vertices without a non-trivial intersecting subgraph of size $3m+1$
is achieved by certain Steiner systems.
We give a construction with more edges showing that their conjecture is not true in general.
\end{abstract}

\section{Introduction}
We use $[n]$ to denote the set $\{1,\ldots,n\}$
and use $\binom{V}{k}$ to denote the collection of all $k$-subsets of some set $V$.
For a hypergraph $\mathcal{H}$ we use $V(\mathcal{H})$ to denote the vertex set of $\mathcal{H}$
and use $|\mathcal{H}|$ to denote the number of edges in $\mathcal{H}$.

For $d \ge 2$ a hypergraph $\mathcal{F}$ is $d$-wise-intersecting if $\bigcap_{i\in [d]}E_i \neq \emptyset$
for all $E_1,\ldots,E_d \in \mathcal{F}$,
and $\mathcal{F}$ is non-trivial $d$-wise-intersecting if it is $d$-wise-intersecting but $\bigcap_{E\in \mathcal{F}}E = \emptyset$.
If $d=2$, then we simply call $\mathcal{F}$ intersecting and non-trivial intersecting, respectively.

A $d$-simplex is a collection of $d+1$ sets $\{A_1,\ldots,A_{d+1}\}$ such that
$\bigcap_{i\neq j}A_{i} \neq \emptyset$ for all $j \in [d+1]$, but $\bigcap_{i\in [d+1]}A_i = \emptyset$.
The Chv\'{a}tal Simplex Conjecture \cite{CH74} states that
for every $k \ge d+1 \ge 3$ and $n \ge (d+1)n/d$ if a hypegraph $\mathcal{H}\subset \binom{[n]}{k}$
does not contain a $d$-simplex as a subgraph, then $|\mathcal{H}| \le \binom{n-1}{k-1}$,
with equality only if $\mathcal{H}$ is a star, i.e. all sets in $\mathcal{H}$ contain a fixed vertex.
The case $k = d+1$ was proved by Chv\'{a}tal \cite{CH74}.
Mubayi and Verstra\"{e}te \cite{MV05} proved the conjecture for all $k \ge 3$ and $d=2$.
Recently, Currier \cite{CU20} proved this conjecture for all $k \ge d+1 \ge 3$ and $n \ge 2k$.
The Chv\'{a}tal Simplex Conjecture is still open in general for $n < 2k$ and $3 \le d \le k-2$,
and we refer the reader to \cite{BF77,CK99,FF87,FR76,FR81,KM10,KL17,LI20}
and their references for more results related to this conjecture.

It is easy to see that the family of all $d$-simplexes is the same as
the family of all non-trivial $d$-wise-intersecting hypergraphs of size $d+1$,
and if a hypergraph is $d$-wise-intersecting, then it is also $d'$-wise-intersecting for all $2\le d' \le d$.

In the proof for the Chv\'{a}tal Simplex Conjecture for $d=2$ Mubayi and Verstra\"{e}te
actually proved the following stronger result.

\begin{theorem}[Mubayi and Verstra\"{e}te \cite{MV05}]\label{THM-MV-05-A}
Let $k \ge d+1 \ge 3$ and $n \ge (d+1)n/d$.
Suppose that $\mathcal{H} \subset \binom{[n]}{k}$ contains no non-trivial intersecting subgraph of size $d+1$.
Then $|\mathcal{H}| \le \binom{n-1}{k-1}$, with equality only if $\mathcal{H}$ is a star.
\end{theorem}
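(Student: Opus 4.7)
My plan is to argue by contradiction: assume $\mathcal{H} \subset \binom{[n]}{k}$ has $|\mathcal{H}| \ge \binom{n-1}{k-1}$ and contains no non-trivial intersecting subfamily of size $d+1$, and deduce that $\mathcal{H}$ must be a star.

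The first step reduces to a non-intersecting situation. If $\mathcal{H}$ is intersecting but not a star, the Hilton--Milner theorem gives $|\mathcal{H}| \le \binom{n-1}{k-1} - \binom{n-k-1}{k-1} + 1$, which for the assumed range of $n$ strictly contradicts $|\mathcal{H}| \ge \binom{n-1}{k-1}$. So I may assume that $\mathcal{H}$ contains two disjoint edges.

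Next I would pick a vertex $v$ of maximum degree and set $\mathcal{H}_v := \{E \in \mathcal{H} : v \in E\}$. As $\mathcal{H}$ is not a star there is an edge $E_0 \in \mathcal{H} \setminus \mathcal{H}_v$, and a pigeonhole using $|\mathcal{H}| \ge \binom{n-1}{k-1}$ shows that $|\mathcal{H}_v|$ is a substantial fraction of $\binom{n-1}{k-1}$. The plan is now to extract $d$ edges $F_1, \dots, F_d \in \mathcal{H}_v$ satisfying (a) each $F_i$ meets $E_0$, and (b) $F_1 \cap \cdots \cap F_d = \{v\}$. If this succeeds then $\{F_1, \dots, F_d, E_0\}$ is a non-trivial intersecting subfamily of size $d+1$: pairwise intersection follows from $v \in F_i$ and (a), while the empty total intersection follows from $v \notin E_0$ and (b). This contradicts the hypothesis, and the analysis of the equality case then forces $\mathcal{H}$ to be a star.

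The core combinatorial lemma I need to establish is that such $F_1, \dots, F_d$ always exist. Setting $\mathcal{G} := \{F \setminus \{v\} : F \in \mathcal{H}_v,\ F \cap E_0 \ne \emptyset\}$, conditions (a) and (b) together say exactly that $\mathcal{G}$ contains $d$ members with empty intersection, i.e.\ $\mathcal{G}$ is \emph{not} $d$-wise intersecting. If it were $d$-wise intersecting, a Frankl-type upper bound on $d$-wise intersecting $(k-1)$-uniform families on $[n-1]$ would cap $|\mathcal{G}|$; combined with the fact that every edge of $\mathcal{H}_v$ disjoint from $E_0$ lies in $\binom{[n] \setminus (\{v\} \cup E_0)}{k-1}$ and so contributes at most $\binom{n-k-1}{k-1}$ members, this would push $|\mathcal{H}_v|$ strictly below the lower bound derived in the previous step.

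The main obstacle is this last step, and in particular the borderline case where the bound $n \ge (d+1)k/d$ is essentially tight. In that regime the structure of a putative $d$-wise intersecting $\mathcal{G}$ must be analysed carefully: if $\mathcal{G}$ has a common element $u$, one has to iterate the argument with the pair $\{v,u\}$ in place of $v$ (working with an edge disjoint from $u$ rather than from $v$), and show that this iteration terminates before the parameters $d$ and $k$ are exhausted. Converting the final outcome ``$|\mathcal{H}| = \binom{n-1}{k-1}$ with no forbidden subfamily'' into ``$\mathcal{H}$ is a star'' --- that is, handling the uniqueness claim, not just the cardinality bound --- is where the Hilton--Milner stability and the maximality of $\deg(v)$ will need to be combined most delicately, and is where I expect most of the technical work to lie.
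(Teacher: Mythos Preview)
This theorem is not proved in the present paper at all; it is quoted from Mubayi--Verstra\"{e}te \cite{MV05} as background, and the paper's own contribution is precisely the complementary range $d \ge k \ge 4$ (Conjecture~\ref{CONJ-MV-05}), handled via the delta-system machinery of Theorems~\ref{THM-xd-cluster-free} and~\ref{THM-nontrivial-p-intersect}. That machinery only yields the conclusion for $n \ge n_0(k,d)$ with $n_0$ at least exponential in $k$ and $d$, so it cannot recover the explicit threshold $n \ge (d+1)k/d$ of Theorem~\ref{THM-MV-05-A}. There is therefore no ``paper's own proof'' of this statement to compare against.

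Regarding your sketch on its own terms: the overall shape (find a high-degree vertex $v$, an edge $E_0$ avoiding $v$, and then $d$ edges through $v$ meeting $E_0$ with common intersection $\{v\}$) is indeed close in spirit to the argument in \cite{MV05}, but two steps as written would not go through. First, Hilton--Milner requires $n > 2k$, whereas the hypothesis $n \ge (d+1)k/d$ with $d \le k-1$ allows $n$ well below $2k$ (already $d=k-1$ gives only $n \ge k + k/(k-1)$); so your opening reduction does not cover the stated range. Second, the pigeonhole you invoke gives only $|\mathcal{H}_v| \ge k|\mathcal{H}|/n$, which is a $k/n$ fraction of $\binom{n-1}{k-1}$ --- not ``substantial'' unless $n$ is close to $k$, and in any case not obviously enough to beat the Frankl-type bound you want to play it against. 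The genuine engine in \cite{MV05} is a shifting argument that lets one assume $\mathcal{H}$ is left-compressed, after which the required $F_1,\dots,F_d$ can be located explicitly; your iteration idea for the borderline case is gesturing at the right difficulty but does not yet supply that mechanism.
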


Mubayi and Verstra\"{e}te also remarked that their proof of Theorem \ref{THM-MV-05-A}
actually works for $d$ slightly greater than $k$ as well, and they posed the following conjecture.

\begin{conjecture}[Mubayi and Verstra\"{e}te \cite{MV05}]\label{CONJ-MV-05}
Let $d \ge k \ge 4$ and $n$ be sufficiently large.
Suppose that $\mathcal{H} \subset \binom{[n]}{k}$ contains no non-trivial intersecting subgraph of size $d+1$.
Then $|\mathcal{H}| \le \binom{n-1}{k-1}$,
with equality only if $\mathcal{H}$ is a star.
\end{conjecture}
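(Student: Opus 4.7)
My plan is first to distill the hypothesis into a clean structural property, then to combine it with a degree-based reduction, leaving the sharp counting as the main technical step.

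The first step is a structural reduction: under the hypothesis, every intersecting sub-family $\mathcal{F} \subseteq \mathcal{H}$ with $|\mathcal{F}| \ge d+1$ must be a star. If instead $\mathcal{F}$ is intersecting with $\bigcap_{F \in \mathcal{F}} F = \emptyset$, fix any $E \in \mathcal{F}$; since $\bigcap \mathcal{F}$ is empty, for each $x \in E$ one can choose $F_x \in \mathcal{F}$ with $x \notin F_x$. Then $\{E\} \cup \{F_x : x \in E\}$ is intersecting of size at most $k+1 \le d+1$ and has empty common intersection (since $E \cap \bigcap_x F_x = \emptyset$). Padding with further members of $\mathcal{F}$ up to size exactly $d+1$, possible since $|\mathcal{F}| \ge d+1$, produces the forbidden non-trivial intersecting $(d+1)$-subgraph.

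Second, a degree reduction: assume $\mathcal{H}$ is not a star and let $v$ be a vertex of maximum degree. If $\deg(v) < d$ then $|\mathcal{H}| \le n(d-1)/k$, which is negligible compared to $\binom{n-1}{k-1}$ for $n$ large; so assume $\deg(v) \ge d$. For each $E \in \mathcal{H}^v := \{F \in \mathcal{H} : v \notin F\}$ the family $\mathcal{H}_v \cup \{E\}$ has at least $d+1$ edges. If it were intersecting, the first step would force it to be a star whose center $u$ lies in $E$ (and $u \ne v$, since $v \notin E$); but then $u$ would lie in every edge of $\mathcal{H}_v$ as well as in $E$, giving $\deg(u) \ge \deg(v)+1$ and contradicting the maximality of $v$. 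Hence every $E \in \mathcal{H}^v$ is disjoint from some $W_E \in \mathcal{H}_v$.

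Finally, I need to conclude $|\mathcal{H}_v| + |\mathcal{H}^v| \le \binom{n-1}{k-1}$ with equality only in the star. The disjointness from the previous step alone gives only the wasteful bound $|\mathcal{H}^v| \le \deg(v) \binom{n-k}{k}$, which is far too weak. The sharpening would iterate the structural reduction inside $\mathcal{H}^v$, which inherits the hypothesis, peeling off successive stars, and combine this with Hilton--Milner-type bounds on intersecting families avoiding a fixed vertex. This final counting step is the main obstacle, and it is presumably where the stronger statement announced in the abstract becomes crucial, most likely by refining the structural reduction so that each $E \in \mathcal{H}^v$ is witnessed not by a single disjoint $W_E$ but by an entire sub-star of $\mathcal{H}_v$ that $E$ misses, which permits a much more efficient count.
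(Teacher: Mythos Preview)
Your first two reductions are correct, and the first one is the natural observation that for $d\ge k$ the hypothesis is equivalent to ``every intersecting subfamily of $\mathcal{H}$ with at least $d+1$ edges is a star''. But the proposal is genuinely incomplete: the entire content of the theorem lies in your ``final counting step'', and nothing you have written bridges that gap. Knowing only that each $E\in\mathcal{H}^v$ is disjoint from some $W_E\in\mathcal{H}_v$ is far too weak --- it is compatible with $|\mathcal{H}^v|$ being of order $n^{k-1}$, since a single edge through $v$ can be disjoint from $\Theta(n^{k-1})$ $k$-sets. Your suggested refinements do not yield a workable scheme: $\mathcal{H}^v$ is not itself intersecting, so Hilton--Milner does not apply to it; peeling off successive stars inside $\mathcal{H}^v$ gives no control on the number of stars; and the speculation that each $E$ misses an entire sub-star of $\mathcal{H}_v$ is both unproven and, even if true, would still require a counting argument you have not supplied. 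In short, you have correctly isolated where the difficulty lies but have not proposed a mechanism to overcome it.

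The paper's route is entirely different and does not pass through your structural reduction at all. It proves the stronger Theorem~\ref{THM-xd-cluster-free}: the bound $\binom{n-1}{k-1}$ already holds for $\mathcal{H}$ merely avoiding a single fixed configuration, an $(\vec a,d)$-$\Delta$-system with $p=3$ parts, which by Observation~\ref{OBS-p-greater-2-implies-nontrivial} is one particular non-trivial intersecting family of size $d+1$. The proof uses F\"uredi's delta-system method (Theorem~\ref{THM-FU83}): one repeatedly extracts $s$-homogeneous subfamilies, shows their intersection patterns have rank at most $k-1$ (Lemma~\ref{LEMMA-rank-J-less-k}), and uses a weight function together with Kruskal--Katona (Lemma~\ref{LEMMA-rank-k-1}, Claim~\ref{CLAIM-shadow-disjoint}) to first establish stability (Theorem~\ref{THM-stability}); a second pass with a refined partition $\mathcal{G}\cup\mathcal{B}_0\cup\mathcal{B}_1\cup\mathcal{B}_2$ then gives the exact bound and the uniqueness of the extremal family. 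This machinery is precisely what handles the tight counting you identified as the obstacle.
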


Let $m \ge 2$.
A Steiner $(n,3,m-1)$-system is a $3$-graph $\mathcal{S}$ on $n$ vertices such that
every pair of vertices in $V(\mathcal{S})$ is contained in exactly $m-1$ edges of $\mathcal{S}$.
It follows from Keevash's result \cite{KEE18B} that if $n$ is a multiple of $3$ and sufficiently large,
then there exists a Steiner $(n,3,m-1)$-system.

Notice that a Steiner $(n,3,m-1)$-system has size $\frac{m-1}{3}\binom{n}{2}$, which is greater than $\binom{n-1}{2}$ when $m \ge 4$.
It was observed by Mubayi and Verstra\"{e}te \cite{MV05} that
a Steiner $(n,3,m-1)$-system does not contain non-trivial intersecting subgraph of size $3m+1$.
Therefore, they made the following conjecture for $3$-graphs.

\begin{conjecture}[Mubayi and Verstra\"{e}te \cite{MV05}]\label{CONJ-MV-05-B}
Let $m \ge 4$ and $n$ be sufficiently large.
Suppose that $\mathcal{H} \subset \binom{[n]}{3}$ contains no non-trivial intersecting family of size $3m+1$.
Then $|\mathcal{H}| \le \frac{m-1}{3}\binom{n}{2}$, with equality holds iff $\mathcal{H}$ is a Steiner  $(n,3,m-1)$-system.
\end{conjecture}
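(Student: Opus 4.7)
\textbf{My plan} is to attempt a proof by bounding pair-degrees. Since $3|\mathcal{H}| = \sum_{\{x,y\} \in \binom{[n]}{2}} d_\mathcal{H}(x,y)$, a uniform pair-degree bound $d_\mathcal{H}(x,y) \le m-1$ would immediately give $|\mathcal{H}| \le \frac{m-1}{3}\binom{n}{2}$, with equality forcing every pair to lie in exactly $m-1$ edges, i.e., $\mathcal{H}$ is a Steiner $(n,3,m-1)$-system. So everything reduces to proving the implication: \emph{if some pair has $d_\mathcal{H}(x,y) \ge m$ in a sufficiently large $\mathcal{H}$, then $\mathcal{H}$ contains a non-trivial intersecting subgraph of size $3m+1$.}

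Suppose $\{x,y\}$ lies in $m$ edges $\{x,y,a_1\},\ldots,\{x,y,a_m\}$, a ``book'' through $\{x,y\}$. This book is trivially intersecting (through $x$, and through $y$). To break triviality, I would look for a vertex $z\notin\{x,y\}$ with large codegree to both $x$ and $y$, then take $m$ edges of $\mathcal{H}$ containing $\{x,z\}$ but not $y$, and $m$ edges containing $\{y,z\}$ but not $x$. Together with the book this yields $3m$ pairwise intersecting edges with no common vertex: every pair of edges shares one of $x,y,z$, but no single vertex lies in all three legs. Producing such a $z$ is a standard double-counting / averaging argument, exploiting the size assumption on $\mathcal{H}$ together with the links of $x$ and $y$. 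To push the family to $3m+1$, I would then try to adjoin one more edge chosen from a fourth ``spine'', or from the dense link of some well-chosen vertex, or from a residual pair that still has codegree at least $m$.

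\textbf{Where I expect the plan to fail.} The genuinely hard step is producing the $(3m{+}1)$-st edge. The three-leg construction above stops exactly at $3m$; pushing to $3m+1$ requires an additional edge whose existence is \emph{not} forced by a pair-degree overshoot on $\{x,y\}$ alone. A carefully engineered $\mathcal{H}$ can have $d_\mathcal{H}(x,y)$ slightly above $m-1$ for a few pairs, have all other codegrees exactly at the Steiner bound, and provide no ``spare'' edge that intersects both the $\{x,z\}$-leg and the $\{y,z\}$-leg while keeping the combined family non-trivial. I suspect this is precisely the obstruction the paper exploits: a modest augmentation of a Steiner $(n,3,m-1)$-system can push $|\mathcal{H}|$ strictly above $\frac{m-1}{3}\binom{n}{2}$ without ever completing a three-leg configuration to size $3m+1$. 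Consequently, any proof attempt along these lines should break at the final edge-finding step — which is consistent with the author's claim that a counterexample exists, and in fact pinpoints the kind of local augmentation the counterexample ought to be built from.
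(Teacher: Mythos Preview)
The statement you are addressing is not proved in the paper --- it is \emph{disproved}. So a ``proof proposal'' for the conjecture is off-target from the outset; the correct deliverable is a counterexample together with a verification that it contains no non-trivial intersecting subgraph of size $3m+1$. You do, in your second paragraph, recognise that your argument stalls at size $3m$ and infer that the conjecture should fail via a local augmentation of a Steiner $(n,3,m-1)$-system. That intuition is right, but it is not a disproof: you neither specify the augmentation nor verify that it avoids the forbidden configuration.

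Compare with what the paper actually does. The construction is concrete: take a Steiner $(n,3,m-1)$-system $\mathcal{S}$ and add a perfect matching $\mathcal{M}$ taken from the complement $\binom{[n]}{3}\setminus\mathcal{S}$ (its existence uses the R\"odl--Ruci\'nski--Szemer\'edi theorem, since every pair has codegree $n-m-1$ in the complement). The resulting $\widehat{\mathcal{S}}$ has $\frac{m-1}{3}\binom{n}{2}+\frac{n}{3}$ edges, and crucially $\Delta_2(\widehat{\mathcal{S}})=m$, with the pairs of codegree exactly $m$ forming $n/3$ vertex-disjoint triangles. The verification then invokes the Kostochka--Mubayi structural theorem: any intersecting $3$-graph on at least $11$ edges sits inside one of $EKR(n),H_0(n),\ldots,H_5(n)$. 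For $H_0,H_2,\ldots,H_5$ a simple pigeonhole forces $\Delta_2>m$, contradiction; for $H_1$ one needs three pairs $\{v_0v_1,v_0v_2,v_0v_3\}$ each of codegree $m$ sharing $v_0$, which the disjoint-triangle structure of the ``heavy'' pairs forbids. Your three-leg sunflower heuristic is essentially the $H_1$ case, and your observation that the $(3m{+}1)$-st edge is the sticking point is exactly what the paper exploits --- but the actual argument requires the classification theorem to rule out the remaining shapes, and the matching structure to kill $H_1$. Neither ingredient appears in your proposal.
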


In this note, we confirm Conjecture \ref{CONJ-MV-05} by proving a stronger statement (Theorem \ref{THM-xd-cluster-free}),
and disprove Conjecture \ref{CONJ-MV-05-B} by showing a construction with more than $\frac{m-1}{3}\binom{n}{2}$
edges and contains no non-trivial intersecting subgraph of size $3m+1$.

Let $s\ge 2$. A family $\mathcal{D} = \{D_1,\ldots,D_s\}$ is a $\Delta$-system (or a sunflower)
if $D_i \cap D_j = C$ for all $\{i,j\} \subset [s]$.
The set $C$ is called the center of $\mathcal{D}$.

\begin{definition}
Let $k, d\ge p \ge 2$, and $\vec{a} = (a_1,\ldots, a_p)$, $\vec{b} = (b_1,\ldots, b_p)$ be two sequences of positive integers
with $\sum_{i=1}^{p}a_i = k$.
\begin{itemize}
\item[(1)] An $\vec{a}$-partition of a $k$-set $E$ is a partition $E = \bigcup_{i\in [p]}A_i$
    such that $|A_i| = a_i$ for $i \in [p]$.
\item[(2)] A semi-$(\vec{a},\vec{b})$-$\Delta$-system is a collection of sets
    $\{E_0,E_{1}^{1},\ldots, E_{1}^{b_1},\ldots, E_{p}^{1},\ldots, E_{p}^{b_p}\}$
    such that for some $\vec{a}$-partition of $E_0=\bigcup_{i\in [p]}A_i$,
    the family $\{E_0, E_{i}^{1},\ldots, E_{i}^{b_i}\}$ is a $\Delta$-system
    with center $E_0\setminus A_i$ for all $i\in [p]$.
    The set $E_0$ is called the host of this semi-$(\vec{a},\vec{b})$-$\Delta$-system.
\item[(3)] An $(\vec{a},\vec{b})$-$\Delta$-system is a semi-$(\vec{a},\vec{b})$-$\Delta$-system
    $\{E_0,E_{1}^{1},\ldots, E_{1}^{b_1},\ldots, E_{p}^{1},\ldots, E_{p}^{b_p}\}$
    such that sets $E_{1}^{1}\setminus E_0,\ldots, E_{1}^{b_1}\setminus E_0,\ldots, E_{p}^{1}\setminus E_0,\ldots, E_{p}^{b_p}\setminus E_0$
    are pairwise disjoint.
\item[(4)] An $(\vec{a},d)$-$\Delta$-system is a $(\vec{a},\vec{b})$-$\Delta$-system for some $\vec{b}$ such that $\sum_{i=1}^{p}b_i = d$.
\end{itemize}
\end{definition}

From the definitions one can easily obtain the following observation.

\begin{observation}\label{OBS-p-greater-2-implies-nontrivial}
Let $k, d\ge p \ge 3$ and $\vec{a} = (a_1,\ldots, a_p)$ be a sequence of integers with $\sum_{i=1}^{p}a_i = k$.
Then an $(\vec{a},d)$-$\Delta$-system is a non-trivial $(p-1)$-wise-intersecting hypergraph with $d+1$ edges.
\end{observation}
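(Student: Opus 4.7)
The plan is to verify directly that an $(\vec{a},d)$-$\Delta$-system has the three required properties: exactly $d+1$ edges, the $(p-1)$-wise-intersecting property, and an empty total intersection. None of the steps are genuinely hard; the only delicate point, and the one I would flag as the main obstacle, is ensuring that each branch $i$ of the sunflower is in fact populated, i.e.\ that $b_i \ge 1$.

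The size is immediate: the system consists of the host $E_0$ together with $\sum_{i=1}^{p} b_i = d$ edges $E_i^{j}$, giving $d+1$ edges in total. For the $(p-1)$-wise-intersecting claim, I fix an arbitrary subfamily $\mathcal{C}$ of $p-1$ edges and look for a block $A_{i^{*}}$ contained in every edge of $\mathcal{C}$. Let $I \subseteq [p]$ be the set of indices $i$ such that some $E_i^{j}$ belongs to $\mathcal{C}$. Since each non-host edge contributes exactly one index and there are at most $p-1$ non-host edges in $\mathcal{C}$, we have $|I| \le p-1$, so pigeonhole yields some $i^{*} \in [p] \setminus I$. Now $A_{i^{*}} \subseteq E_0$, and for every $E_i^{j} \in \mathcal{C}$ the sunflower property $E_i^{j} \cap E_0 = E_0 \setminus A_i$ combined with $i \ne i^{*}$ forces $E_i^{j} \supseteq A_{i^{*}}$. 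Hence $A_{i^{*}}$ lies in the intersection of $\mathcal{C}$, and $a_{i^{*}} \ge 1$ guarantees it is nonempty.

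For non-triviality, suppose some vertex $v$ lies in every edge of the family. Then $v \in E_0$, so $v \in A_i$ for a unique $i \in [p]$. The earlier definition of a $\Delta$-system in the paper requires $s \ge 2$ sets, which silently forces every $b_i \ge 1$; in particular $E_i^{1}$ belongs to the family, and $E_i^{1} \cap E_0 = E_0 \setminus A_i$ prevents $v$ from lying in $E_i^{1}$, a contradiction. As flagged above, this convention $b_i \ge 1$ is the essential subtle point: without it, any $v \in A_i$ with $b_i = 0$ would lie in every remaining edge by the very same sunflower argument used in the previous paragraph, and non-triviality would fail.
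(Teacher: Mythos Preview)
Your proof is correct. The paper does not supply a proof of this observation at all; it simply states that it follows directly from the definitions. Your argument is exactly the natural verification the author had in mind: pigeonhole on the $p$ blocks to find a common $A_{i^*}$ in any $p-1$ edges, and then the sunflower condition at block $i$ to kill any putative common vertex. One minor remark: you justify $b_i\ge 1$ via the convention $s\ge 2$ in the paper's definition of a $\Delta$-system, which works, but in fact the paper's Definition already requires $\vec{b}$ to be a sequence of \emph{positive} integers, so $b_i\ge 1$ is explicit and no subtlety is needed.
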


An $(\vec{a},d)$-$\Delta$-system in which $d = p$, i.e. $b_1 = \cdots = b_p = 1$
was studied by F\"{u}redi and \"{O}zkahya in \cite{FO11}.
In this note we employ a machinery (a complicated version of the delta-system method)
developed by them and even earlier by Frankl and F\"{u}redi \cite{FF85},
to obtain the following tight bound for the size of a hypergraph without $(\vec{a},d)$-$\Delta$-systems for all $d\ge p \ge 2$.

\begin{theorem}\label{THM-xd-cluster-free}
Let $k > p \ge 2$, $d \ge p$, and $\vec{a} = (a_1,\ldots,a_p)$ be a sequence of positive integers with $\sum_{i=1}^{p}a_i = k$.
Suppose that $n \ge n_0(k,d)$ is sufficiently large and $\mathcal{H} \subset \binom{[n]}{k}$
does not contain a $(\vec{a},d)$-$\Delta$-system as a subgraph.
Then $|\mathcal{H}| \le \binom{n-1}{k-1}$,
with equality only if $\mathcal{H}$ is a star.
\end{theorem}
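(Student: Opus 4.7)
My plan is to follow the delta-system method of Frankl and F\"{u}redi \cite{FF85}, using the template developed by F\"{u}redi and \"{O}zkahya \cite{FO11} for the case $b_1=\cdots=b_p=1$. Fix a large integer $s=s(k,d)$ of order $kd$. The Frankl-F\"{u}redi intersection-semilattice lemma provides a subfamily $\mathcal{H}^*\subseteq\mathcal{H}$ with $|\mathcal{H}^*|\ge c(k,s)\,|\mathcal{H}|$ and a common ``intersection pattern'' $\mathcal{J}\subseteq 2^{[k]}$: after relabelling each $E\in\mathcal{H}^*$ by a bijection $\phi_E\colon E\to[k]$, the set $\mathcal{J}$ records exactly which proper subsets of $E$ appear as centres of $s$-sunflowers of $\mathcal{H}^*$ based at $E$. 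The parameter $s$ is chosen large enough that any such sunflower still contains $d$ petals that avoid an arbitrary preselected set of at most $kd$ elements.

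The heart of the argument is a dichotomy on $\mathcal{J}$. If some $v^*\in[k]$ lies in every maximal set of $\mathcal{J}$, then the corresponding vertex $v\in[n]$ is contained in all but a negligible fraction of edges of $\mathcal{H}^*$, and a standard stability step (relying on $n$ being large) upgrades this to the statement that $v$ is in every edge of $\mathcal{H}$, yielding the star bound. Otherwise, I aim to exhibit sets $B_1,\ldots,B_p\in\mathcal{J}$ whose complements $A_i=[k]\setminus B_i$ have the prescribed sizes $a_1,\ldots,a_p$ and form a partition of $[k]$. Given such $B_i$, pick any $E_0\in\mathcal{H}^*$, transport the partition to $E_0$ via $\phi_{E_0}^{-1}$, and for each $i$ greedily extract $b_i$ petals from the sunflower centred at the pulled-back copy of $B_i$ whose tails $E_i^{j}\setminus E_0$ avoid all tails already selected. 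Since $\sum_i b_i=d$ and each sunflower has at least $s>kd$ petals, the greedy step always succeeds, producing an $(\vec{a},d)$-$\Delta$-system in $\mathcal{H}$ and contradicting the hypothesis.

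The main obstacle I anticipate is the structural step producing the $\vec{a}$-partition inside a non-star pattern $\mathcal{J}$: one must exhibit, for the given composition $\vec{a}=(a_1,\ldots,a_p)$ of $k$, sets $B_i\in\mathcal{J}$ of size $k-a_i$ whose complements partition $[k]$. I expect this to follow from an induction on $p$ that exploits closure of $\mathcal{J}$ under intersection and, if necessary, a further refinement of $\mathcal{H}^*$ at intermediate ranks of the intersection semilattice; some care is required when several of the $a_i$ equal $1$. The equality case is a short add-on: if $|\mathcal{H}|=\binom{n-1}{k-1}$ but some edge $E^*$ avoids the common vertex $v$, then the abundance of star edges through $v$ hitting $E^*$ in prescribed patterns can themselves be combined with $E^*$ to form an $(\vec{a},d)$-$\Delta$-system, yielding a contradiction and forcing $\mathcal{H}$ to be a star.
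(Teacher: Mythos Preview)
Your overall framework is correct, but the central dichotomy breaks down in the ``non-star'' branch. You claim that when no $v^*\in[k]$ lies in every maximal set of $\mathcal{J}$, one can find $B_1,\ldots,B_p\in\mathcal{J}$ of sizes $k-a_1,\ldots,k-a_p$ whose complements partition $[k]$. This is false as a combinatorial statement about intersection-closed families. Take $k=4$, $\vec{a}=(2,2)$, and
\[
\mathcal{J}=\bigl\{\emptyset,\{1\},\{2\},\{3\},\{4\},\{1,4\},\{2,4\},\{3,4\},\{1,2,3\}\bigr\}.
\]
This $\mathcal{J}$ is intersection-closed, has rank $k-1=3$, and no vertex lies in all maximal sets (the maximal sets are $\{1,2,3\}$ and the three pairs $\{i,4\}$). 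Yet each of the three $(2,2)$-partitions of $[4]$ has exactly one block whose complement is \emph{not} in $\mathcal{J}$, since $\{1,2\},\{1,3\},\{2,3\}\notin\mathcal{J}$. Your greedy sunflower extraction cannot start, and neither induction on $p$ nor further homogeneous refinement repairs this, because the obstruction is in $\mathcal{J}$ itself. The paper does something genuinely different here: it allows \emph{one} of the $p$ centres to be witnessed by a single edge of $\mathcal{H}$ rather than by a sunflower in $\mathcal{H}^*$ (Lemma~\ref{LEMMA-semi-implies-nonsemi} with $c_1=1$), and proves via a delicate case analysis (Claim~\ref{CLAIM-no-t-1-edges}) that if even this fails then a K\"{o}nig--Hall argument forces $\omega_{\mathcal{H}}(E)\ge k/(k-1)$. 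Summing weights against $|\partial\mathcal{H}|\le\binom{n}{k-1}$ then shows the non-star portion of $\mathcal{H}$ is negligible. Your plan has no substitute for this weight argument.

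There is a secondary gap in the star branch. You write ``the corresponding vertex $v\in[n]$,'' but the $k$-partite structure only singles out a \emph{part} $V_{v^*}$; distinct edges of $\mathcal{H}^*$ may pick distinct vertices from that part as their ``centre'' $c(E)$. The paper extracts homogeneous pieces iteratively, groups edges by $c(E)$, shows the resulting link families have pairwise disjoint shadows (Claim~\ref{CLAIM-shadow-disjoint}), and applies Kruskal--Katona to force one centre to dominate. The ``standard stability step'' you invoke is in fact most of the proof of Theorem~\ref{THM-stability}, and the equality case requires the further cleanup of Claims~\ref{CLAIM-G-B0}--\ref{CLAIM-B2}, which is considerably more than pairing star edges with a single stray set $E^*$.
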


\textbf{Remark:} Our proof of Theorem \ref{THM-xd-cluster-free} uses the delta-system method and
Theorem \ref{THM-FU83} due to F\"{u}redi, so our lower bound for $n_0(k,d)$ is at least
exponential in $k$ and $d$.
It would be interesting to find the minimum value of $n_0(k,d)$ such that the statement
in Theorem \ref{THM-xd-cluster-free} holds for all $n \ge n_0(k,d)$.

The following result is an immediate consequence of
Theorem \ref{THM-xd-cluster-free} and Observation \ref{OBS-p-greater-2-implies-nontrivial}.

\begin{theorem}\label{THM-nontrivial-p-intersect}
Let $k > p \ge 3$, $d \ge p$.
Suppose that $n \ge n_0(k,d)$ is sufficiently large and $\mathcal{H} \subset \binom{[n]}{k}$
does not contain a non-trivial $(p-1)$-wise-intersecting subgraph of size $d+1$.
Then $|\mathcal{H}| \le \binom{n-1}{k-1}$,
with equality only if $\mathcal{H}$ is a star.
\end{theorem}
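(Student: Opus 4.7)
The plan is to derive Theorem \ref{THM-nontrivial-p-intersect} as an immediate corollary of Theorem \ref{THM-xd-cluster-free}, via the contrapositive of Observation \ref{OBS-p-greater-2-implies-nontrivial}. Since $k > p \ge 3$, there exists at least one sequence $\vec{a} = (a_1, \ldots, a_p)$ of positive integers with $\sum_{i=1}^{p} a_i = k$; for concreteness one may take $\vec{a} = (k-p+1, 1, \ldots, 1)$, but any such partition of $k$ into $p$ positive parts will do, and I would simply fix one such $\vec{a}$.

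Next I would argue by contradiction. Suppose $\mathcal{H}$ contained an $(\vec{a}, d)$-$\Delta$-system as a subgraph. By Observation \ref{OBS-p-greater-2-implies-nontrivial}, this sub-family is a non-trivial $(p-1)$-wise-intersecting hypergraph with exactly $d+1$ edges, contradicting the hypothesis on $\mathcal{H}$. Therefore $\mathcal{H}$ contains no $(\vec{a}, d)$-$\Delta$-system, and invoking Theorem \ref{THM-xd-cluster-free} (whose hypothesis $n \ge n_0(k,d)$ is already in force) yields $|\mathcal{H}| \le \binom{n-1}{k-1}$, with equality only if $\mathcal{H}$ is a star.

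There is no genuine obstacle at this stage: the entire substance of the argument lives inside Theorem \ref{THM-xd-cluster-free}, whose proof deploys the delta-system method together with F\"{u}redi's result. Observation \ref{OBS-p-greater-2-implies-nontrivial} itself is a routine unpacking of the definitions --- any $p-1$ edges of an $(\vec{a},d)$-$\Delta$-system must miss at least one index $i^{\ast} \in [p]$ among the $p$ centers, so their common intersection contains $A_{i^{\ast}}$ and is therefore nonempty, while no vertex lies in all $d+1$ edges since $\bigcap_{i \in [p]}(E_0 \setminus A_i) = \emptyset$. Consequently, nothing beyond selecting $\vec{a}$ and citing the two earlier results is required to complete the proof.
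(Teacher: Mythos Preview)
Your proposal is correct and matches the paper's own argument exactly: the paper states that Theorem~\ref{THM-nontrivial-p-intersect} is an immediate consequence of Theorem~\ref{THM-xd-cluster-free} and Observation~\ref{OBS-p-greater-2-implies-nontrivial}, which is precisely what you do by fixing any $\vec{a}$ with $p$ positive parts summing to $k$ and invoking those two results.
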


Note that Conjecture \ref{CONJ-MV-05} is a special case of Theorem \ref{THM-nontrivial-p-intersect}, i.e. $p=3$.

We are also able to prove the following stability version of Theorem \ref{THM-xd-cluster-free}.

\begin{theorem}\label{THM-stability}
Let $k > p \ge 2$, $d \ge p$, and $\vec{a} = (a_1,\ldots,a_p)$ be a sequence of positive integers with $\sum_{i=1}^{p}a_i = k$.
For every $\delta > 0$ there exists $\epsilon > 0$ and $n_0(k,d,\delta)$ such that the following holds for all $n\ge n_0(k,d,\delta)$.
Suppose that $\mathcal{H} \subset \binom{[n]}{k}$ does not contain a $(\vec{a},d)$-$\Delta$-system as a subgraph,
and $|\mathcal{H}| \ge (1-\epsilon)\binom{n-1}{k-1}$.
Then there exists a vertex $v \in [n]$ such that $v$ is contained in all but at most $\delta n^{k-1}$ edges in $\mathcal{H}$.
\end{theorem}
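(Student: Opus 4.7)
The plan is to bootstrap the stability statement from the exact bound in Theorem \ref{THM-xd-cluster-free} together with the delta-system machinery used in its proof. The strategy is contrapositive: assume the conclusion fails, i.e.\ every vertex $v\in[n]$ is missed by more than $\delta n^{k-1}$ edges of $\mathcal{H}$, and produce a forbidden $(\vec{a},d)$-$\Delta$-system inside $\mathcal{H}$.

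First, apply the delta-system reduction (via Theorem \ref{THM-FU83}, the F\"uredi intersection-structure theorem used in the proof of Theorem \ref{THM-xd-cluster-free}) to $\mathcal{H}$. This yields a subfamily $\mathcal{H}^\ast\subseteq\mathcal{H}$ with $|\mathcal{H}^\ast|\ge c(k,d)\,|\mathcal{H}|\ge c(k,d)(1-\epsilon)\binom{n-1}{k-1}$ together with a bounded-size kernel $\mathcal{K}$ realizing the common intersection structure of all edges in $\mathcal{H}^\ast$, and such that every set in $\mathcal{K}$ is the center of an arbitrarily large sunflower in $\mathcal{H}^\ast$. Since $\mathcal{H}^\ast$ is still $(\vec{a},d)$-$\Delta$-system free, the proof of Theorem \ref{THM-xd-cluster-free} applied to $\mathcal{H}^\ast$ forces the kernel $\mathcal{K}$ to be a star through some fixed vertex $v^\ast\in[n]$; consequently every edge of $\mathcal{H}^\ast$ contains $v^\ast$. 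This is the step where the exact result is converted into concrete structural information, at the price of losing a constant fraction of edges.

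Next, set $\mathcal{B}=\{E\in\mathcal{H}:v^\ast\notin E\}$ and assume toward contradiction that $|\mathcal{B}|>\delta n^{k-1}$. Choose any $E_0\in\mathcal{B}$ and any $\vec{a}$-partition $E_0=A_1\cup\cdots\cup A_p$. To build an $(\vec{a},d)$-$\Delta$-system it suffices, for each $i\in[p]$, to pick $b_i$ edges $E_i^1,\ldots,E_i^{b_i}\in\mathcal{H}$ containing the $(k-a_i)$-set $E_0\setminus A_i$, whose "petals" $E_i^j\setminus E_0$ are pairwise disjoint from each other and from every other petal already chosen. Since $|\mathcal{H}|\ge(1-\epsilon)\binom{n-1}{k-1}$, a standard double-counting of codegrees (Kruskal--Katona together with the hypothesis on $|\mathcal{H}|$) shows that for all but $o(n^{k-1})$ choices of a $(k-a_i)$-set $T\subset[n]$, the number of edges of $\mathcal{H}$ containing $T$ is $\Omega(n^{a_i})$, which dwarfs the $O(kd)$ vertices that must be avoided when selecting the petal greedily. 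A sequential greedy selection then yields the forbidden system, contradicting the assumption on $\mathcal{H}$. To have such a usable $E_0$ at all, we will use the hypothesis $|\mathcal{B}|>\delta n^{k-1}$ to extract an edge $E_0\in\mathcal{B}$ whose $(k-a_i)$-subsets are all of "high codegree type", which will be possible as long as $\epsilon=\epsilon(\delta,k,d)$ is chosen sufficiently small.

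The main obstacle is the last point: the family $\mathcal{B}$, although of size $>\delta n^{k-1}$, could in principle be concentrated on atypical $(k-a_i)$-subsets whose codegrees in $\mathcal{H}$ are low, in which case the greedy construction around $E_0$ stalls. Ruling this out amounts to showing that among the $>\delta n^{k-1}$ edges of $\mathcal{B}$ at least one has \emph{every} $\vec{a}$-partition supported on high-codegree $(k-a_i)$-sets. This will be handled by iterating the delta-system reduction on $\mathcal{B}$ itself, or by a weighted averaging argument that exploits $|\mathcal{H}|\ge(1-\epsilon)\binom{n-1}{k-1}$ to bound the total number of "low-codegree" $(k-a_i)$-sets by $O(\epsilon n^{k-a_i})$. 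Calibrating the quantifiers so that $\epsilon\ll\delta\ll 1/(kd)$ is the delicate bookkeeping step; with this choice, the bad $(k-a_i)$-sets cannot block more than a $\delta/2$ fraction of $\mathcal{B}$, leaving a valid $E_0$ and completing the contradiction.
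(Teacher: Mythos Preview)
Your proposal has two genuine gaps.

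\textbf{The structural step is circular, and its conclusion is unjustified.} In the paper the proof of Theorem \ref{THM-xd-cluster-free} explicitly \emph{uses} Theorem \ref{THM-stability}, so invoking the former to establish the latter is circular. Setting that aside, a single $s$-homogeneous piece $\mathcal{H}^*$ with intersection pattern $\mathcal{J}$ of rank $k-1$ is \emph{not} a star: what Lemma \ref{LEMMA-rank-k-1}(1) gives is that for each $E\in\mathcal{H}^*$ the $(k-1)$-sets in $\mathcal{I}(E,\mathcal{H}^*)$ all contain a vertex $c(E)$, but $c(E)$ is simply the vertex of $E$ lying in one fixed part $V_i$ of the $k$-partition and varies from edge to edge. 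Pinning down a single common vertex $v^*$ is precisely the content of the theorem. The paper achieves it by (i) iteratively peeling off homogeneous pieces $\mathcal{H}_1,\mathcal{H}_2,\dots$, (ii) using the weight $\omega_{\mathcal{H}}$ together with Lemma \ref{LEMMA-rank-k-1} to show that almost all edges lie in pieces whose pattern has exactly $k-1$ maximal $(k-1)$-sets, (iii) proving that the shadows $\partial\mathcal{G}_i(i)$ of the fibres $\mathcal{G}_i=\{E:c(E)=i\}$ are pairwise disjoint (via Lemma \ref{LEMMA-large-intersection}), and (iv) applying Kruskal--Katona across these disjoint shadows to force one $\mathcal{G}_i$ to absorb $(1-O(\epsilon))\binom{n-1}{k-1}$ edges. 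Your sketch bypasses all of (i)--(iv).

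\textbf{The codegree argument fails already on the extremal example.} Take $\mathcal{H}$ to be the full star through $v^*$. For any $(k-a_i)$-set $T$ with $v^*\notin T$ the codegree in $\mathcal{H}$ is $\binom{n-|T|-1}{a_i-1}=\Theta(n^{a_i-1})$, not $\Omega(n^{a_i})$; and such $T$ comprise a $(1-o(1))$-fraction of all $(k-a_i)$-sets. (Your ``all but $o(n^{k-1})$'' is vacuous whenever $a_i\ge 2$, since there are only $\Theta(n^{k-a_i})$ such sets, and false when $a_i=1$.) Now every $E_0\in\mathcal{B}$ avoids $v^*$, so \emph{every} $(k-a_i)$-subset of $E_0$ is of this low-codegree type; when some $a_i=1$ the codegree is exactly $1$ and the greedy construction of an $(\vec a,d)$-$\Delta$-system hosted at $E_0$ cannot proceed. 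In the paper the bound on the number of edges missing $v^*$ is not obtained by building a forbidden configuration around such an edge; it drops out of the size accounting once step (iv) above has been carried out.
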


For $3$-graphs the following result shows that Conjecture \ref{CONJ-MV-05-B} is not true in general.

\begin{theorem}\label{THM-counterexample}
Let $m \ge 4$, $n$ be a multiple of $3$ and sufficiently large.
Then there exists a $3$-graph $\widehat{\mathcal{S}}$ on $n$ vertices with $\frac{m-1}{3}\binom{n}{2} + \frac{n}{3}$
edges and contains no non-trivial intersecting subgraph of size $3m+1$.
\end{theorem}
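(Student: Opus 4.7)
The plan is to set $\widehat{\mathcal{S}}:=\mathcal{S}\cup\mathcal{M}$, where $\mathcal{S}$ is a Steiner $(n,3,m-1)$-system on $[n]$ (provided by Keevash's theorem for $n$ a sufficiently large multiple of $3$) and $\mathcal{M}$ is a perfect matching in the $3$-graph sense, namely a partition of $[n]$ into $n/3$ pairwise vertex-disjoint triples, chosen so that $\mathcal{M}\cap\mathcal{S}=\emptyset$. Such an $\mathcal{M}$ exists since the $3$-graph $\binom{[n]}{3}\setminus\mathcal{S}$ has minimum codegree at least $n-m-1$, so a simple greedy argument (peeling off vertex-disjoint triples not in $\mathcal{S}$) produces a perfect matching. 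By disjointness,
\[
|\widehat{\mathcal{S}}|=|\mathcal{S}|+|\mathcal{M}|=\frac{m-1}{3}\binom{n}{2}+\frac{n}{3}.
\]

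Suppose, for contradiction, that $\mathcal{F}\subseteq\widehat{\mathcal{S}}$ is non-trivial intersecting with $|\mathcal{F}|=3m+1$. Since the edges of $\mathcal{M}$ are pairwise disjoint, $|\mathcal{F}\cap\mathcal{M}|\le 1$. If $\mathcal{F}\cap\mathcal{M}=\emptyset$, then $\mathcal{F}\subseteq\mathcal{S}$, contradicting the observation of Mubayi and Verstra\"{e}te that a Steiner $(n,3,m-1)$-system contains no non-trivial intersecting subgraph of size $3m+1$. Otherwise $\mathcal{F}\cap\mathcal{M}=\{T\}$ for a matching triple $T=\{t_1,t_2,t_3\}$, and $\mathcal{F}':=\mathcal{F}\setminus\{T\}\subseteq\mathcal{S}$ has exactly $3m$ edges, each meeting $T$.

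I would first argue that $\mathcal{F}'$ is itself non-trivial intersecting. If some $v\in T$ lies in every edge of $\mathcal{F}'$, then $v$ lies in every edge of $\mathcal{F}$, violating non-triviality; if $v\notin T$ does, then each edge of $\mathcal{F}'$ contains a pair $\{v,t_i\}$, which by the Steiner property lies in only $m-1$ edges of $\mathcal{S}$, so $|\mathcal{F}'|\le 3(m-1)<3m$. Next I aim to produce $T'\in\mathcal{S}\setminus\mathcal{F}'$ with $T'\cap E\neq\emptyset$ for every $E\in\mathcal{F}'$, turning $\mathcal{F}'\cup\{T'\}$ into a non-trivial intersecting subfamily of $\mathcal{S}$ of size $3m+1$ and contradicting Mubayi--Verstra\"{e}te once more. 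The natural candidates are the $m-1$ Steiner edges $\{t_i,t_j,z\}$ through a pair of $T$: such a $T'$ automatically meets every edge of $\mathcal{F}'$ containing $t_i$ or $t_j$, and the only remaining constraint is that the third vertex $z$ lie in every edge of $\mathcal{F}'_k:=\{E\in\mathcal{F}':E\cap T=\{t_k\}\}$.

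The main obstacle is this final case analysis. I would partition $\mathcal{F}'=\mathcal{F}'_1\sqcup\mathcal{F}'_2\sqcup\mathcal{F}'_3\sqcup\mathcal{F}'_{12}\sqcup\mathcal{F}'_{13}\sqcup\mathcal{F}'_{23}$, where $\mathcal{F}'_{ij}$ consists of edges meeting $T$ in exactly $\{t_i,t_j\}$ and so has size at most $m-1$. Using the intersecting condition inside $\mathcal{F}'$, edges in $\mathcal{F}'_i$ and $\mathcal{F}'_j$ must share a non-$T$ vertex, which (together with the $(m-1)$-regularity of the Steiner links at the $t_i$'s) severely restricts the structure of each $\mathcal{F}'_k$. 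I expect that for at least one choice of pair $\{t_i,t_j\}\subseteq T$ the required $z\in\bigcap_{E\in\mathcal{F}'_k}E$ exists and $\{t_i,t_j,z\}\notin\mathcal{F}'_{ij}$; in the boundary configurations where no such $T'$ can be produced, a direct count combining the bounds $|\mathcal{F}'_{ij}|\le m-1$ with the link constraints should show $|\mathcal{F}'|<3m$, closing the proof.
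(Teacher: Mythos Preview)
Your construction $\widehat{\mathcal{S}}=\mathcal{S}\cup\mathcal{M}$ is exactly what the paper uses. One small point: a naive greedy argument does not quite produce a \emph{perfect} matching in $\binom{[n]}{3}\setminus\mathcal{S}$, since once only $O(m)$ vertices remain you can no longer guarantee a non-Steiner triple among them; the paper invokes the R\"odl--Ruci\'nski--Szemer\'edi theorem on the complement (which has minimum codegree $n-m-1$) to get a genuine perfect matching.

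The real gap is in your verification. Your plan is to replace the matching edge $T$ by a Steiner edge $T'=\{t_i,t_j,z\}$ so that $\mathcal{F}'\cup\{T'\}\subseteq\mathcal{S}$ is still non-trivially intersecting of size $3m+1$, contradicting the Mubayi--Verstra\"ete observation. But the two requirements on $z$ pull in different directions: $z$ must be one of the $m-1$ third vertices of the Steiner edges through $\{t_i,t_j\}$ (a set determined by the arbitrary Steiner system $\mathcal{S}$, over which you have no control), and simultaneously $z$ must lie in $\bigcap_{E\in\mathcal{F}'_k}(E\setminus\{t_k\})$, a set of size at most two once $\mathcal{F}'_k\neq\emptyset$. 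There is no a~priori reason these sets overlap, for any choice of $\{i,j\}$. Your fallback---that whenever no such $T'$ exists a direct count forces $|\mathcal{F}'|<3m$---is asserted but not argued, and it is precisely the heart of the matter; the interaction among the six parts $\mathcal{F}'_i,\mathcal{F}'_{ij}$ and the arbitrary placement of the Steiner links is delicate, and ``I expect'' is not a proof.

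The paper sidesteps all of this by appealing to the Kostochka--Mubayi structure theorem: any intersecting $3$-graph with at least $11$ edges lies inside one of six explicit templates $EKR(n),H_0(n),\ldots,H_5(n)$. Since $\Delta_2(\widehat{\mathcal{S}})=m$, simple codegree estimates in each template rule out $H_0,H_2,\ldots,H_5$ immediately. For the remaining case $\mathcal{F}\subseteq H_1(n)$ one obtains a vertex $v_0$ and three vertices $v_1,v_2,v_3$ with $\deg_{\widehat{\mathcal{S}}}(v_0v_i)=m$ for all $i$; but the pairs of codegree $m$ in $\widehat{\mathcal{S}}$ form $n/3$ vertex-disjoint triangles (coming from $\mathcal{M}$), so three such pairs cannot share a vertex, a contradiction. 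This avoids entirely the replacement step you are trying to carry out.
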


In Section 2 we present a construction that proves Theorem \ref{THM-counterexample}.
In Section 3 we present some preliminary lemmas for the proof of Theorems \ref{THM-xd-cluster-free} and \ref{THM-stability},
and in Section 4 we prove Theorems \ref{THM-xd-cluster-free} and \ref{THM-stability}.

\section{Constructions}
In this section we give a construction to show that Conjecture \ref{CONJ-MV-05-B} is not true in general.
We need the following structural theorem of intersecting $3$-graphs due to Kostochka and Mubayi \cite{KM17}.
Define
\begin{itemize}
\item $EKR(n) = \left\{ A \in \binom{[n]}{3}: 1 \in A \right\}$.
\item $H_{0}(n) = \left\{ A \in \binom{[n]}{3}: |A\cap [3]|\ge 2 \right\}$.
\item $H_{1}(n) = \left\{ A \in \binom{[n]}{3}: 1\in A \text{ and } |A\cap \{2,3,4\}|\ge 1 \right\} \cup \{234\}$.
\item $H_{2}(n) = \left\{ A \in \binom{[n]}{3}: 1\in A \text{ and } |A\cap \{2,3\}|\ge 1 \right\} \cup \{234,235,145\}$.
\item $H_{3}(n) = \left\{ A \in \binom{[n]}{3}: \{1,2\}\in A \right\} \cup \{134,135,145,234,235,245\}$.
\item $H_{4}(n) = \left\{ A \in \binom{[n]}{3}: \{1,2\}\in A \right\} \cup \{ 134, 156, 235, 236, 245, 246 \}$.
\item $H_{5}(n) = \left\{ A \in \binom{[n]}{3}: \{1,2\}\in A \right\} \cup \{ 134, 156, 136, 235, 236, 246 \}$.
\end{itemize}

\begin{theorem}[Kostochka and Mubayi \cite{KM17}]\label{THM-MK17-structure-3-gp}
Every intersecting $3$-graph with at least $11$ edges is contained in one of
$EKR(n),H_{0}(n),H_{1}(n),\ldots,H_{5}(n)$.
\end{theorem}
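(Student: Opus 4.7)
Let $\mathcal{H}$ be an intersecting $3$-graph with $|\mathcal{H}|\ge 11$, and write $\tau = \tau(\mathcal{H})$ for its covering number. Since every edge is itself a transversal, $\tau\le 3$. The plan is to case-split on $\tau$ and, within each case, identify which of the seven listed families must contain $\mathcal{H}$ after a suitable relabeling of vertices.

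If $\tau = 1$, some vertex lies in every edge, and relabeling it as $1$ gives $\mathcal{H}\subseteq EKR(n)$. If $\tau = 3$, fix an edge $T = \{1,2,3\}\in\mathcal{H}$; every other edge meets $T$, and the aim is to show each does so in at least two vertices. The idea is that a single edge $e = \{1,x,y\}\in\mathcal{H}$ with $x,y\notin\{2,3\}$ would force every edge of $\mathcal{H}\setminus\{T,e\}$ into the very restricted set of $3$-subsets hitting both $\{2,3\}$ and $\{x,y\}$; a short finite check then shows that either a $2$-cover emerges (contradicting $\tau = 3$) or the total number of edges is at most $10$, contradicting $|\mathcal{H}|\ge 11$. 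By symmetry the same argument rules out edges meeting $T$ only in $2$ or only in $3$, so every edge meets $T$ in at least two vertices, i.e.\ $\mathcal{H}\subseteq H_0(n)$.

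The heart of the argument is $\tau = 2$. Let $\{1,2\}$ be a minimum cover and partition $\mathcal{H}=\mathcal{A}\cup\mathcal{B}_1\cup\mathcal{B}_2$, where $\mathcal{A}$ is the set of edges containing both $1$ and $2$ and $\mathcal{B}_i$ contains the edges through $i$ but not the other element of $\{1,2\}$; both $\mathcal{B}_1$ and $\mathcal{B}_2$ are nonempty since $\tau = 2$. Define the link graph $\mathcal{T}_i=\{e\setminus\{i\}:e\in\mathcal{B}_i\}$ on $[n]\setminus\{1,2\}$. Each $\mathcal{T}_i$ is an intersecting graph, and every edge of $\mathcal{T}_1$ must meet every edge of $\mathcal{T}_2$. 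Using the classical dichotomy that an intersecting graph is either a star or contained in a triangle, subcases arise:
\begin{itemize}
\item if $\mathcal{T}_1$ and $\mathcal{T}_2$ are both stars, the cross-intersection condition forces them to share a centre, say vertex $3$, and depending on how many ``sporadic'' edges outside the obvious $\{1,2,\ast\}$-star appear one obtains containment in $H_1(n)$, $H_2(n)$ or $H_3(n)$;
\item if one is a star and the other a triangle, cross-intersection is so restrictive that the configuration either collapses into the previous case or has fewer than $11$ edges;
\item if both are triangles, cross-intersection forces $\mathcal{T}_1=\mathcal{T}_2$ to be a common triangle on three vertices, and a tight enumeration of which $3$-sets through $1$ or $2$ can coexist yields $H_4(n)$ or $H_5(n)$.
\end{itemize}
Throughout, $|\mathcal{H}|\ge 11$ is used to discard finitely many exceptional configurations that neither fit into one of the seven families nor violate $\tau = 2$.

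The main obstacle is the $\tau = 2$ case: one must track the sporadic extra edges (such as $\{234\},\{235\},\{145\}$ appended in $H_2(n)$, or the six non-obvious edges appearing in $H_4(n)$ and $H_5(n)$) and verify that no other pattern survives all the constraints. A clean way to organize this is to fix the pair $(\mathcal{T}_1,\mathcal{T}_2)$ and then determine, for each possibility, the maximal family of edges through $\{1,2\}$ compatible with extending $\mathcal{H}$ to an intersecting hypergraph with at least $11$ edges. This finite but intricate enumeration is exactly what forces the precise list $H_1$--$H_5$ rather than a longer one, and it is where essentially all of the work lies.
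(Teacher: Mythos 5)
The paper does not prove Theorem~\ref{THM-MK17-structure-3-gp}: it is imported as a black-box citation from Kostochka and Mubayi \cite{KM17} and used directly in the proof of Proposition~\ref{PROP-construct-S}. So there is no ``paper's own proof'' for your sketch to be compared against, and an attempt to reprove the full classification from scratch is not what the paper does or requires.

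Taken on its own terms, your outline has two concrete problems beyond being an un-executed sketch. First, the $\tau=3$ case is internally inconsistent: if every edge meets $T=\{1,2,3\}$ in at least two vertices, then every edge contains $1$ or $2$, so $\{1,2\}$ is a cover and $\tau\le 2$, contradicting $\tau=3$. Thus the correct conclusion of that case is that $\tau=3$ together with $|\mathcal{H}|\ge 11$ is impossible (the extremal $\tau=3$ examples $\binom{[5]}{3}$ and the Fano plane have only $10$ and $7$ edges), and $H_0(n)$ --- which in fact has covering number $2$, with minimum cover $\{1,2\}$ --- must emerge from the $\tau=2$ analysis, where your sketch currently omits it. Second, in the $\tau=2$ case you assert that each link graph $\mathcal{T}_i=\{e\setminus\{i\}:e\in\mathcal{B}_i\}$ is intersecting, but this is false in general: two edges of $\mathcal{B}_1$ may meet only in the vertex $1$ (e.g.\ $\{1,3,4\}$ and $\{1,5,6\}$), giving disjoint edges in $\mathcal{T}_1$. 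Only cross-intersection between $\mathcal{T}_1$ and $\mathcal{T}_2$ is guaranteed, so the star/triangle dichotomy cannot be invoked directly for each $\mathcal{T}_i$ and that branch of the argument does not go through as written.
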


For a $3$-graph $\mathcal{H}$ and $\{u,v\} \subset V(\mathcal{H})$ let $\deg_{\mathcal{H}}(uv)$ denote the number of edges in $\mathcal{H}$
that contain both $u$ and $v$.
Let $\Delta_{2}(\mathcal{H}) = \max\{\deg_{\mathcal{H}}(uv): \{u,v\} \subset V(\mathcal{H})\}$.

\begin{observation}\label{OBS-heavy-shadow-nontrivial-intersec}
Let $\mathcal{H}$ be a $3$-graph with $e$ edges.
If $\mathcal{H} \subset H_{0}(n)$, then $\Delta_{2}(\mathcal{H}) \ge \lceil \frac{e}{3} \rceil$.
If $\mathcal{H} \subset H_{2}(n)$, then $\Delta_{2}(\mathcal{H}) \ge \lceil \frac{e-3}{2} \rceil$.
If $\mathcal{H}$ is contained in $H_{3}(n)$, $H_{4}(n)$, or $H_{5}(n)$, then $\Delta_{2}(\mathcal{H}) \ge e-6$.
\end{observation}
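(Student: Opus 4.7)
The proof is a direct pigeonhole argument on the three structural descriptions, exploiting that each $H_i(n)$ is built around a small number of ``dense'' pairs together with a bounded list of exceptional edges.

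The plan is to separate, in each case, the edges of $\mathcal{H}$ into those coming from the ``main'' part of $H_i(n)$ (defined by a link condition on $[3]$ or on $\{1,2\}$) and those coming from the explicit finite exceptional set. For $H_0(n)$, every edge $A$ satisfies $|A\cap[3]|\ge 2$, so $A$ contains at least one of the three pairs $\{1,2\},\{1,3\},\{2,3\}$; summing the pair-degrees gives $\deg_{\mathcal{H}}(12)+\deg_{\mathcal{H}}(13)+\deg_{\mathcal{H}}(23)\ge e$, hence some pair has degree at least $\lceil e/3\rceil$. For $H_2(n)$, at most three edges of $\mathcal{H}$ lie in the exceptional set $\{234,235,145\}$; the remaining $\ge e-3$ edges all contain $1$ together with at least one of $\{2,3\}$, so they contribute to $\deg_{\mathcal{H}}(12)+\deg_{\mathcal{H}}(13)$, and pigeonhole yields $\Delta_2(\mathcal{H})\ge\lceil(e-3)/2\rceil$. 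For $H_3(n), H_4(n), H_5(n)$, the main part consists exactly of the edges containing $\{1,2\}$ and the exceptional list has size $6$, so $\deg_{\mathcal{H}}(12)\ge e-6$.

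There is essentially no obstacle here: the whole content of the observation is the combinatorial bookkeeping described above, and the constants $3,(e-3)/2,e-6$ arise simply as $e$ divided by the number of ``covering pairs'' of the main part, after subtracting the fixed number of exceptional edges. The only mild care needed is to note that in $H_0(n)$ the edge $\{1,2,3\}$, if present, contributes to all three pair-degrees, which only helps the pigeonhole bound, and in $H_2(n)$ the estimate $e-3$ is valid even when $\mathcal{H}$ contains fewer than three of the exceptional edges, so the ceiling is honest.
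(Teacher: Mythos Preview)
Your argument is correct and is precisely the intended pigeonhole count; the paper states this result as an observation without proof, and what you wrote is the natural verification.
One small remark: in the $H_2(n)$ case you could even note that the edge $123$ (if present) contributes to both $\deg_{\mathcal{H}}(12)$ and $\deg_{\mathcal{H}}(13)$, which only strengthens the inequality, exactly as you observed for $H_0(n)$.
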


Now we define the construction.
Let $n$ be a multiple of $3$ and sufficiently large.
Let $\mathcal{S} \subset \binom{[n]}{3}$ be a Steiner $(n,3,m-1)$-system.
Then the complement of $\mathcal{S}$, which is $\bar{\mathcal{S}}:= \binom{[n]}{3}\setminus \mathcal{S}$,
satisfies that $d_{\bar{\mathcal{S}}}(uv) = n-m+1$ for all $\{u,v\}\subset V(\mathcal{S})$.
Therefore, by the R\"{o}dl-Ruci\'{n}ski-Szemer\'{e}di Theorem \cite{RRS09},
$\bar{\mathcal{S}}$ contains a matching $\mathcal{M}$ with $n/3$ edges.
Let $\widehat{\mathcal{S}} = \mathcal{S} \cup \mathcal{M}$.
Then it is easy to see that
\begin{align}
|\widehat{\mathcal{S}}| = |\mathcal{S}| + |\mathcal{M}| = \frac{m-1}{3}\binom{n}{2} + \frac{n}{3}. \notag
\end{align}

The following proposition proves Theorem \ref{THM-counterexample}.

\begin{proposition}\label{PROP-construct-S}
Let $m \ge 4$.
Then $\widehat{\mathcal{S}}$ does not contain a non-trivial intersecting subgraph of size $3m+1$.
\end{proposition}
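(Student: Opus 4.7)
The plan is to argue by contradiction. I will suppose $\mathcal{F}\subset \widehat{\mathcal{S}}$ is a non-trivial intersecting subgraph with $|\mathcal{F}|=3m+1$ and derive a contradiction by decomposing $\mathcal{F} = \mathcal{F}_{\mathcal{S}} \cup \mathcal{F}_{\mathcal{M}}$, where $\mathcal{F}_{\mathcal{S}} := \mathcal{F}\cap \mathcal{S}$ and $\mathcal{F}_{\mathcal{M}} := \mathcal{F}\cap\mathcal{M}$. Since $\mathcal{M}$ is a matching, any two of its edges are disjoint, so $|\mathcal{F}_{\mathcal{M}}|\le 1$ and hence $|\mathcal{F}_{\mathcal{S}}|\ge 3m\ge 12$. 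In particular $\mathcal{F}_{\mathcal{S}}$ is an intersecting $3$-graph with at least $11$ edges, so Theorem~\ref{THM-MK17-structure-3-gp} will place $\mathcal{F}_{\mathcal{S}}$ inside one of $EKR(n), H_0(n), H_1(n), \ldots, H_5(n)$. The key numerical input I will use throughout is that $\mathcal{S}$ is a Steiner $(n,3,m-1)$-system, so $\Delta_2(\mathcal{F}_{\mathcal{S}})\le \Delta_2(\mathcal{S}) = m-1$.

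The routine cases are $\mathcal{F}_{\mathcal{S}} \subset H_i(n)$ for $i\in\{0,2,3,4,5\}$. Here I apply Observation~\ref{OBS-heavy-shadow-nontrivial-intersec} to $|\mathcal{F}_{\mathcal{S}}|\ge 3m$, obtaining $\Delta_2(\mathcal{F}_{\mathcal{S}})\ge m$, $\lceil(3m-3)/2\rceil$, or $3m-6$ respectively. A direct arithmetic check shows that each of these quantities strictly exceeds $m-1$ for every $m\ge 4$, contradicting the Steiner bound above.

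The main obstacle is in the remaining cases $\mathcal{F}_{\mathcal{S}} \subset EKR(n)$ and $\mathcal{F}_{\mathcal{S}} \subset H_1(n)$, in which $\mathcal{F}_{\mathcal{S}}$ essentially lies in the star at vertex $1$ and the codegree approach does not directly apply. In both cases every edge of $\mathcal{F}_{\mathcal{S}}$ contains the vertex $1$, with the possible exception of the single triple $\{2,3,4\}$ in the $H_1$ case. Non-triviality of $\mathcal{F}$ forces some $T \in \mathcal{F}$ with $1 \notin T$, which must be either the edge $\{2,3,4\}$ (only possible in the $H_1$ subcase) or the unique matching edge $M \in \mathcal{F}_{\mathcal{M}}$ satisfying $1 \notin M$. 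Every other edge of $\mathcal{F}_{\mathcal{S}}$ contains $1$ and, by the intersecting property, meets $T$, so it must contain one of the three pairs $\{1,t\}$ with $t \in T$. Since each such pair lies in exactly $m-1$ edges of $\mathcal{S}$, there are at most $3(m-1)$ such edges, giving $|\mathcal{F}_{\mathcal{S}}| \le 3(m-1) + 1$ and consequently $|\mathcal{F}| \le 3(m-1) + 1 + 1 = 3m-1 < 3m+1$, the desired contradiction.
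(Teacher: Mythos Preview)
Your argument is correct, but it is organized differently from the paper's. The paper applies Theorem~\ref{THM-MK17-structure-3-gp} directly to the non-trivial family $\mathcal{F}\subset\widehat{\mathcal{S}}$ (so the $EKR$ case is excluded from the start) and uses the bound $\Delta_2(\widehat{\mathcal{S}})=m$; in the remaining $H_1$ case it argues that the three relevant codegrees $\deg_{\widehat{\mathcal{S}}}(v_0v_i)$ would all have to equal $m$, which is impossible because the pairs of codegree $m$ in $\widehat{\mathcal{S}}$ are exactly the shadows of the matching $\mathcal{M}$ and hence form $n/3$ vertex-disjoint triangles. You instead peel off the (at most one) matching edge first and work with $\mathcal{F}_{\mathcal{S}}\subset\mathcal{S}$, gaining the sharper bound $\Delta_2(\mathcal{S})=m-1$; this lets you dispose of the $EKR$/$H_1$ cases by a direct count, $|\mathcal{F}|\le 3(m-1)+2$, without ever invoking the triangle structure of the codegree-$m$ pairs. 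Your route is slightly more elementary in the $H_1$ case at the cost of having to treat the $EKR$ case (which the paper avoids by non-triviality). One small point of care: when both $\{2,3,4\}\in\mathcal{F}_{\mathcal{S}}$ and a matching edge $M\notin\{1\}$ lie in $\mathcal{F}$, your sentence ``every other edge of $\mathcal{F}_{\mathcal{S}}$ contains $1$'' is only literally true if you take $T=\{2,3,4\}$; the final bound $|\mathcal{F}|\le 3m-1$ is unaffected.
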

\begin{proof}
Suppose not.
Let $\mathcal{F} \subset \widehat{\mathcal{S}}$ be a non-trivial intersecting subgraph with $3m+1 \ge 11$ edges.
By Theorem \ref{THM-MK17-structure-3-gp}, $\mathcal{F}$ is contained in one of $H_0(n),H_{1}(n),\ldots,H_{5}(n)$.
Notice that $\Delta_2(\mathcal{F}) \le \Delta_2(\widehat{\mathcal{S}}) = m$.
If $\mathcal{F}$ is contained in one of $H_0(n), H_{2}(n),\ldots,H_{5}(n)$,
then by Observation \ref{OBS-heavy-shadow-nontrivial-intersec},
$\Delta_2(\mathcal{F}) \ge \min\left\{\lceil \frac{3m+1}{3} \rceil, \lceil \frac{3}{2}m-1 \rceil, 3m-5\right\} > m$, a contradiction.
Therefore, $\mathcal{F} \subset H_1(n)$.
Then $\mathcal{F}$ contains four vertices $v_0,v_1,v_2,v_3$ such that
$\deg_{\widehat{\mathcal{S}}}(v_0v_1) + \deg_{\widehat{\mathcal{S}}}(v_0v_2) + \deg_{\widehat{\mathcal{S}}}(v_0v_3) \ge 3m$,
which implies
$\deg_{\widehat{\mathcal{S}}}(v_0v_1) = \deg_{\widehat{\mathcal{S}}}(v_0v_2) = \deg_{\widehat{\mathcal{S}}}(v_0v_3) = m$.
However, this is impossible because the set $\left\{ \{u,v\} \subset V(\mathcal{S}): \deg_{\widehat{S}}(uv)=m \right\}$
consists of $n/3$ copies of pairwise vertex-disjoint triangles.
\end{proof}

\section{Lemmas}
In this section we present some preliminary lemmas for the proofs of Theorems \ref{THM-xd-cluster-free} and \ref{THM-stability}.
Our first lemma shows that a sufficiently large semi-$(\vec{a},\vec{c})$-$\Delta$-system
contains an $(\vec{a},\vec{b})$-$\Delta$-system.

\begin{lemma}\label{LEMMA-semi-implies-nonsemi}
Let $k, d\ge p \ge 2$ and $\vec{a} = (a_1,\ldots, a_p)$, $\vec{b} = (b_1,\ldots, b_p)$, $\vec{c} = (c_1,\ldots, c_p)$
be sequences of positive integers with $\sum_{i=1}^{p}a_i = k$.
Suppose that $c_i \ge b_i + \sum_{j=1}^{i-1}a_jb_j$ for $i \in [p]$.
Then every semi-$(\vec{a},\vec{c})$-$\Delta$-system contains an $(\vec{a},\vec{b})$-$\Delta$-system.
In particular, if $c_1 \ge 1$ and $c_i \ge k d$ for $2 \le i \le p$,
then every semi-$(\vec{a},\vec{c})$-$\Delta$-system contains an $(\vec{a},d)$-$\Delta$-system.
\end{lemma}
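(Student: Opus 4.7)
The plan is a direct greedy selection inside the given semi-$(\vec{a},\vec{c})$-$\Delta$-system $\{E_0,\,E_i^j : i\in[p],\,j\in[c_i]\}$ with its fixed $\vec{a}$-partition $E_0=\bigcup_{i\in[p]}A_i$. Two structural observations come for free from the definition and drive everything. First, since $E_i^j\cap E_0 = E_0\setminus A_i$, the "petal" $P_i^j := E_i^j\setminus E_0$ has size exactly $a_i$ and is disjoint from $E_0$. Second, for each fixed $i$ the petals $\{P_i^j : j\in[c_i]\}$ are pairwise disjoint, because $E_i^j\cap E_i^{j'}=E_0\setminus A_i\subseteq E_0$ for $j\neq j'$.

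With these in hand I would process $i=1,2,\dots,p$ in order and, at step $i$, select $b_i$ petals for index $i$ that are vertex-disjoint from all petals chosen in earlier steps. Suppose that step has produced pairwise disjoint petals $P_{j}^{\cdot}$ accounting for $\sum_{j<i}a_jb_j$ vertices outside $E_0$. By the second observation, for fixed $i$ each vertex outside $E_0$ lies in at most one of the $c_i$ candidate petals; hence a single previously chosen petal of size $a_{i'}$ blocks at most $a_{i'}$ candidates. Therefore at most $\sum_{j<i}a_jb_j$ candidates are forbidden and at least $c_i - \sum_{j<i}a_jb_j \ge b_i$ remain, so the step succeeds. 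The collection assembled this way, together with the host $E_0$ and its $\vec{a}$-partition, is an $(\vec{a},\vec{b})$-$\Delta$-system: within-index disjointness is automatic from the second observation, and across-index disjointness is exactly what the greedy step enforced.

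For the "in particular" assertion I would instantiate $\vec{b}$ by setting $b_1=b_2=\cdots=b_{p-1}=1$ and $b_p=d-p+1$; this is a tuple of positive integers (using $d\ge p$) summing to $d$, so an $(\vec{a},\vec{b})$-$\Delta$-system is an $(\vec{a},d)$-$\Delta$-system. It remains to verify $c_i\ge b_i+\sum_{j<i}a_jb_j$ for every $i$. For $i=1$ this is $c_1\ge 1$, which is given. For $2\le i\le p-1$ the right-hand side is $1+\sum_{j<i}a_j\le 1+k$. For $i=p$ it is $(d-p+1)+\sum_{j<p}a_j\le d+k$. Both $1+k$ and $d+k$ are at most $kd$ whenever $k,d\ge 2$, so the hypothesis $c_i\ge kd$ suffices.

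This is a short greedy argument and I do not foresee a real obstacle; the only care needed is in the bookkeeping for the "in particular" constants, namely placing the largest $b_i$ at the last index so that the threshold $kd$ dominates the worst bound $d+k$ uniformly for $i\ge 2$.
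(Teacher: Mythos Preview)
Your proof is correct and follows essentially the same greedy approach as the paper: process the indices $i=1,\ldots,p$ in order, and at each step observe that a previously selected petal of size $a_j$ can block at most $a_j$ of the current candidate petals (since the candidates at a fixed index are pairwise disjoint), leaving at least $c_i-\sum_{j<i}a_jb_j\ge b_i$ choices. The only cosmetic difference is in the ``in particular'' clause: the paper takes $b_1=1$ and leaves $b_2,\ldots,b_p$ arbitrary positive integers summing to $d-1$, whereas you specifically put the large entry $d-p+1$ at the last index; both choices verify the inequality $b_i+\sum_{j<i}a_jb_j\le kd$ without difficulty.
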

\begin{proof}
Let $\mathcal{F} = \{E_0,E_{1}^{1},\ldots, E_{1}^{c_1},\ldots, E_{p}^{1},\ldots, E_{p}^{c_p}\}$ be a semi-$(\vec{a},\vec{c})$-$\Delta$-system.
Our goal is to choose $\{F_i^{1},\ldots,F_{i}^{b_i}\} \subset \{E_i^{1},\ldots,E_{i}^{c_i}\}$ for all $i \in [p]$
so that sets $E_0$, $F_{1}^{1},\ldots, F_{1}^{b_1}$, $\ldots$, $F_{p}^{1},\ldots, F_{p}^{b_p}$ form a $(\vec{a},\vec{b})$-$\Delta$-system.

Since $c_1 \ge b_1$, we can simply let $F_{1}^{j} = E_{1}^{j}$ for $j \in [b_1]$.
Now suppose that we have defined sets $\{F_{1}^{1},\ldots, F_{1}^{b_1},\ldots, F_{i}^{1},\ldots, F_{i}^{b_i}\}$
for some $i\in [p-1]$.
We are going to define sets $F_{i+1}^{1},\ldots, F_{i+1}^{b_{i+1}}$.
Note that for every $1 \le j \le i$ and $1 \le \ell \le b_j$ the set $F_{j}^{\ell}\setminus E_0$
can have nonempty intersection with at most $a_j$ sets in $\{E_{i+1}^{1},\ldots, E_{i+1}^{c_{i+1}}\}$.
Since $c_{i+1} \ge b_{i+1} + \sum_{j=1}^{i}a_jb_j$, there exist at least $b_{i+1}$ sets in $\{E_{i+1}^{1},\ldots, E_{i+1}^{c_{i+1}}\}$
that have empty intersection with all sets in
$\{F_{1}^{1}\setminus E_0,\ldots, F_{1}^{b_1}\setminus E_0,\ldots, F_{i}^{1}\setminus E_0,\ldots, F_{i}^{b_i}\setminus E_0\}$,
and choose any $b_i$ sets from them to form $\{F_{i+1}^{1},\ldots, F_{i+1}^{b_{i+1}}\}$.
The process terminates when $i=p$, and clearly, sets $E_0,F_{1}^{1},\ldots, F_{1}^{b_1},\ldots, F_{p}^{1},\ldots, F_{p}^{b_p}$ form an $(\vec{a},\vec{b})$-$\Delta$-system.

Now suppose that $c_1 \ge 1$ and $c_i \ge k d$ for $2 \le i \le p$.
Let $b_1 = 1$ and $b_i \ge 1$ for $2 \le i \le p$ such that $\sum_{i=2}^{p}b_i = d-1$.
Since $c_i \ge k d \ge b_i + \sum_{j=1}^{i-1}a_jb_j$, by the previous argument,
$\mathcal{F}$ contains an $(\vec{a},\vec{b})$-$\Delta$-system, which is an $(\vec{a},d)$-$\Delta$-system.
\end{proof}

For a hypergraph $\mathcal{H}$ and $E \in \mathcal{H}$.
The intersection structure of $E$ with respect to $\mathcal{H}$ is
\begin{align}
\mathcal{I}(E,\mathcal{H}) := \{E \cap E': E' \in \mathcal{H}\setminus\{E\}\}. \notag
\end{align}
A hypergraph $\mathcal{H} \subset \binom{[n]}{k}$ is $k$-partite if there exists a partition
$[n] = V_1\cup \cdots \cup V_k$ such that $|E\cap V_i| = 1$ for all $i\in [k]$.
Suppose that $\mathcal{H}$ is $k$-partite with $k$ parts $V_1,\ldots,V_k$.
Then for every $S \subset [n]$, its projection is $\Pi(S):= \{i: S \cap V_i \neq \emptyset\}$.
For every family $\mathcal{F} \subset 2^{[n]}$, its projection is $\Pi(\mathcal{F}):= \{\Pi(F): F\in \mathcal{F}\}$.

\begin{definition}
Let $s \ge 2$.
A hypergraph $\mathcal{H} \subset \binom{[n]}{k}$ is $s$-homogeneous if it satisfies the following conditions.
\begin{itemize}
\item[(1)] $\mathcal{H}$ is $k$-partite.
\item[(2)] There exists a family $\mathcal{J} \subset 2^{[k]}\setminus\{[k]\}$ such that
           $\Pi(\mathcal{I}(E,\mathcal{H})) = \mathcal{J}$ for all $E \in \mathcal{H}$,
           where $\mathcal{J}$ is called the intersection pattern of $\mathcal{H}$.
\item[(3)] $\mathcal{J}$ is closed under intersection, i.e. if $A,B\in \mathcal{J}$, then $A \cap B \in \mathcal{J}$.
\item[(4)] For every $E \in \mathcal{H}$ every set in $\mathcal{I}(E,\mathcal{H})$ is the center of a $\Delta$-system $\mathcal{D}$
           of size $s$ formed by edges of $\mathcal{H}$ and containing $E$, i.e. $E\in \mathcal{D} \subset \mathcal{H}$.
\end{itemize}
A hypergraph $\mathcal{H} \subset \binom{[n]}{k}$ is homogeneous if it is $s$-homogeneous for some $s\ge 2$.
\end{definition}

F\"{u}redi \cite{FU83} showed that for every $s \ge 2$, every hypergraph contains a large $s$-homogeneous subgraph.

\begin{theorem}[F\"{u}redi \cite{FU83}]\label{THM-FU83}
For every $s,k \ge 2$, there exists a constant $c(k,s) > 0$ such that every
hypergraph $\mathcal{H} \subset \binom{[n]}{k}$ contains a $s$-homogeneous subgraph $\mathcal{H}^{\ast}$
with $|\mathcal{H}^{\ast}| \ge c(k,s)|\mathcal{H}|$.
\end{theorem}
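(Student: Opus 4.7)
The approach is the classical iterated-refinement argument of the delta-system method: at each stage I pass to a subhypergraph whose density in $\mathcal{H}$ is bounded below by a constant depending only on $k$ and $s$, and in which one further clause in the definition of $s$-homogeneity becomes guaranteed. The final constant $c(k,s)$ is the product of the multiplicative losses.

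First I would make $\mathcal{H}$ $k$-partite by colouring $[n]$ uniformly at random with $k$ colours and keeping only rainbow edges, losing a factor $k!/k^k$. I then stabilise the intersection pattern $\tau(E,\mathcal{G}):=\Pi(\mathcal{I}(E,\mathcal{G}))\subseteq 2^{[k]}\setminus\{[k]\}$. Since there are at most $2^{2^k}$ possible types, pigeonhole finds a subhypergraph in which every edge shares a common type $\mathcal{J}_0$; but removing edges can shrink the types, so I iterate, obtaining a nested sequence $\mathcal{J}_0\supseteq\mathcal{J}_1\supseteq\cdots$. Since each $|\mathcal{J}_i|\le 2^k$ and strictly decreases whenever a further refinement is needed, the process stabilises in at most $2^k$ rounds and leaves a subhypergraph $\mathcal{G}$ on which every edge has the same stable type $\mathcal{J}$.

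Next I install the sunflower condition via the Erd\H{o}s--Ko--Rado sunflower lemma, replacing $s$ by $s':=\max(s,k+1)$ to secure closure under intersection in the last step. For each $C\in\mathcal{J}$ and each $E\in\mathcal{G}$, the fibre $\{E'\in\mathcal{G}:\, E\cap E'=E|_{\Pi^{-1}(C)}\}$ consists of $(k-|C|)$-sets disjoint from a fixed $|C|$-set, so once this fibre exceeds $(s'-1)^{k-|C|}(k-|C|)!$ it contains an $s'$-sunflower with centre $E|_{\Pi^{-1}(C)}$ through $E$. Edges failing this at some $C$ are ``bad'', and an averaging argument over $|C|$-shadows shows that bad edges form only a constant-factor minority. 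Removing them may again shrink the types, so I alternate with re-stabilisation; the same monotonicity of $|\mathcal{J}|$ bounds the number of alternations by $2^k$. Closure of $\mathcal{J}$ under intersection is now automatic: given $A,B\in\mathcal{J}$ and $E\in\mathcal{G}$ with witnesses $E_A,E_B$, the $s'$-sunflower at centre $E|_{\Pi^{-1}(A)}$ contains $s'>k$ petals, at least one of which is disjoint from $E_B\setminus E$, and this petal $E_A'$ satisfies $\Pi(E_A'\cap E_B)=A\cap B\in\tau(E_A',\mathcal{G})=\mathcal{J}$.

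The main technical obstacle is that the three clauses of $s$-homogeneity interact: enforcing sunflowers can destroy homogeneity of type, and re-stabilising types can remove previously established sunflower witnesses. This is resolved by using $|\mathcal{J}|$ as a potential, which strictly decreases after each non-trivial round of interaction; since $|\mathcal{J}|\le 2^k$, boundedly many rounds suffice, and multiplying the $O(2^k)$ pigeonhole losses by the $O_{k,s}(1)$ sunflower-lemma losses yields a positive constant $c(k,s)$.
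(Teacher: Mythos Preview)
The paper does not prove Theorem~\ref{THM-FU83} at all: it is quoted from F\"{u}redi's 1983 paper and used throughout as a black box, so there is no ``paper's own proof'' to compare against. Your sketch is, in spirit, the classical Frankl--F\"{u}redi delta-system argument that underlies the cited result, and the overall architecture (random $k$-partition, pigeonhole on the finitely many projected intersection types, iterated re-stabilisation with $|\mathcal{J}|$ as a potential, sunflower lemma, and the $s'>k$ trick to force closure under intersection) is the right one.

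There is, however, one genuine technical gap in your sunflower step. You assert that once the fibre $\{E'\in\mathcal{G}: E\cap E'=E|_{\Pi^{-1}(C)}\}$ is large, the sunflower lemma produces an $s'$-sunflower \emph{with centre exactly $E|_{\Pi^{-1}(C)}$ and containing $E$}. What the sunflower lemma actually gives is a sunflower among the fibre elements whose centre is $E|_{\Pi^{-1}(C)}\cup D'$ for some (possibly nonempty) $D'$ disjoint from $E$; adjoining $E$ to such a sunflower does \emph{not} yield a $\Delta$-system with centre $E|_{\Pi^{-1}(C)}$ unless $D'=\emptyset$, because the fibre elements pairwise meet in the larger set. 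Condition~(4) in the definition of $s$-homogeneous requires the centre to be precisely the set in $\mathcal{I}(E,\mathcal{H})$, so this is not a cosmetic issue. F\"{u}redi's original argument handles this by stabilising a different object --- essentially the family of projections of sets that \emph{are} centres of $s$-sunflowers through $E$ --- so that condition~(4) holds by definition, and then arguing separately that $\Pi(\mathcal{I}(E,\mathcal{H}^\ast))$ coincides with this stabilised family. Your outline would need an analogous extra step (or an induction on $|C|$ from the top down, using that any unwanted $D'$ forces a strictly larger $C'\in\mathcal{J}$) before the proof is complete.
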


For a family $\mathcal{J} \subset 2^{[k]}\setminus\{[k]\}$ the rank of $\mathcal{J}$ is
\begin{align}
r(\mathcal{J}) := \min\{|A|: A\subset [k], A\not\in \mathcal{J} \text{ and  $\not\exists B \in \mathcal{J}$ such that $A \subset B$} \}. \notag
\end{align}
It is easy to see from the definition that $r(\mathcal{J}) = k$ iff $\mathcal{J} = 2^{[k]}\setminus\{[k]\}$.

For a hypergraph $\mathcal{H}\subset \binom{[n]}{k}$ and $1 \le i \le k-1$ the $i$-th shadow of $\mathcal{H}$ is
\begin{align}
\partial_{i}\mathcal{H}:= \left\{A\in\binom{[n]}{k-i}: \exists E\in \mathcal{H} \text{ such that } A\subset E\right\}. \notag
\end{align}
For convention, let $\partial_{0}\mathcal{H} = \mathcal{H}$.

The following lemma gives an upper bound for the size of a homogeneous hypergraph $\mathcal{H}$
in terms of the rank of its intersection pattern and its shadow.

\begin{lemma}\label{LEMMA-rank-up-bound}
Let $\mathcal{H} \subset \binom{[n]}{k}$ be a homogeneous hypergraph
with intersection pattern $\mathcal{J} \subset 2^{[k]}\setminus\{[k]\}$.
Then $|\mathcal{H}| \le |\partial_{k-r(\mathcal{J})}\mathcal{H}|$.
\end{lemma}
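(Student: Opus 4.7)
The plan is to construct an explicit injection from $\mathcal{H}$ into $\partial_{k-r(\mathcal{J})}\mathcal{H}$. Write $r = r(\mathcal{J})$ and let $V_1,\ldots,V_k$ be the parts of the $k$-partition of $\mathcal{H}$. By the definition of rank I can choose a fixed witness set $A \subseteq [k]$ with $|A| = r$ such that $A \notin \mathcal{J}$ and no $B \in \mathcal{J}$ satisfies $A \subseteq B$; that is, $A$ is not contained in any member of $\mathcal{J}$. (If no such $A$ of size less than $k$ exists then $r = k$ and the inequality $|\mathcal{H}| \le |\partial_0 \mathcal{H}| = |\mathcal{H}|$ is trivial, so I may assume $r < k$.)

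Next, for every edge $E \in \mathcal{H}$ I would define
\[
\varphi(E) := E \cap \bigcup_{i \in A} V_i.
\]
Because $\mathcal{H}$ is $k$-partite, $\varphi(E)$ is an $r$-subset of $E$, so it lies in $\partial_{k-r}\mathcal{H}$, giving a well-defined map $\varphi : \mathcal{H} \to \partial_{k-r}\mathcal{H}$.

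The heart of the argument is to check that $\varphi$ is injective. Suppose, for contradiction, that $E \neq E'$ are edges of $\mathcal{H}$ with $\varphi(E) = \varphi(E')$. Then $\varphi(E) \subseteq E \cap E'$, and so the projection $\Pi(E \cap E')$ contains $A$. But $E \cap E' \in \mathcal{I}(E,\mathcal{H})$, and by homogeneity condition (2) we have $\Pi(E \cap E') \in \Pi(\mathcal{I}(E,\mathcal{H})) = \mathcal{J}$, so $A$ is contained in a member of $\mathcal{J}$, contradicting the choice of $A$. Injectivity yields $|\mathcal{H}| \le |\partial_{k-r}\mathcal{H}|$ and completes the proof.

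The only part that requires any care is verifying that a witness $A$ of size $r(\mathcal{J})$ exists (handled by the trivial case above) and confirming that $\varphi(E)$ really has size $r$; both follow immediately from the $k$-partite structure. Closure of $\mathcal{J}$ under intersection and the $\Delta$-system condition~(4) are not used in this lemma; they will play their role downstream, which explains why this shadow-bound is the clean combinatorial backbone of the delta-system machinery.
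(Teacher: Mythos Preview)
Your proof is correct and follows essentially the same approach as the paper: pick a witness $A$ (the paper calls it $S$) of size $r(\mathcal{J})$ not contained in any member of $\mathcal{J}$, and observe that the $r$-subset of each edge lying in the parts indexed by $A$ is unique to that edge, giving an injection into $\partial_{k-r}\mathcal{H}$. Your version is slightly more explicit about the injection and the $k$-partite structure, but the idea is identical; note also that the separate handling of $r=k$ is unnecessary, since $A=[k]$ is itself a valid witness and your argument with $\varphi(E)=E$ goes through verbatim.
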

\begin{proof}
Let $r = r(\mathcal{J})$.
By the definition of rank, there exists an $r$-set $S\subset [k]$ that is not contained in $\mathcal{J}$,
and moreover, every $T\subset [k]$ that contains $S$ is also not contained in $\mathcal{J}$.
Since $\Pi(\mathcal{I}(E,\mathcal{H})) = \mathcal{J}$ for all $E \in \mathcal{H}$,
there exists an $r$-set in every $E \in \mathcal{H}$ that is not contained in any other edges in $\mathcal{H}$.
Therefore, $|\mathcal{H}| \le  |\partial_{k-r}\mathcal{H}|$.
\end{proof}

The following lemma shows that if a hypergraph is $s$-homogeneous for sufficiently large $s$
and does not contain an $(\vec{a},d)$-$\Delta$-system as a subgraph,
then the rank of its intersection pattern is at most $k-1$.

\begin{lemma}\label{LEMMA-rank-J-less-k}
Let $d \ge p \ge 2$, $k>p$, $s\ge kd+1$, and $\vec{a} = (a_1,\ldots, a_p)$
be a sequence of positive integers with $\sum_{i=1}^{p}a_i = k$.
Let $\mathcal{H} \subset \binom{[n]}{k}$ be a $s$-homogeneous hypergraph
with intersection pattern $\mathcal{J} \subset 2^{[k]}\setminus\{[k]\}$.
If $r(\mathcal{J}) = k$, then $\mathcal{H}$ contains an $(\vec{a},d)$-$\Delta$-system.
\end{lemma}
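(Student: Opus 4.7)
The plan is to show that, under the assumption $r(\mathcal{J})=k$, we have $\mathcal{J}=2^{[k]}\setminus\{[k]\}$, so every proper subset of $[k]$ arises as a projection of some intersection $E\cap E'$. Together with the $\Delta$-system promised by homogeneity, this will directly produce a semi-$(\vec{a},\vec{c})$-$\Delta$-system with each $c_i$ as large as $s-1$, and then Lemma \ref{LEMMA-semi-implies-nonsemi} will finish the job.

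More precisely, first I would note that $r(\mathcal{J})=k$ means no proper subset of $[k]$ fails to belong to $\mathcal{J}$ (and in particular is not contained in some member of $\mathcal{J}$), which forces $\mathcal{J}=2^{[k]}\setminus\{[k]\}$. Let $V_1,\ldots,V_k$ be the parts of the $k$-partition of $\mathcal{H}$, fix the $\vec{a}$-partition $[k]=A_1\cup\cdots\cup A_p$, and write $V_{A_i}:=\bigcup_{j\in A_i}V_j$. For any $E_0\in\mathcal{H}$ this induces the $\vec{a}$-partition $E_0=\bigcup_{i\in[p]}(E_0\cap V_{A_i})$.

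Next, I would use that, for each $i\in[p]$, the set $[k]\setminus A_i$ is a proper subset of $[k]$ and hence lies in $\mathcal{J}$. Therefore $\mathcal{I}(E_0,\mathcal{H})$ contains a set with projection $[k]\setminus A_i$; since $\mathcal{H}$ is $k$-partite and any such intersection is a subset of $E_0$, this set must be $C_i:=E_0\setminus(E_0\cap V_{A_i})$. By property (4) of $s$-homogeneity, $C_i$ is the center of a $\Delta$-system $\mathcal{D}_i\subset\mathcal{H}$ of size $s$ containing $E_0$, producing edges $E_i^1,\ldots,E_i^{s-1}\in\mathcal{H}$ with $E_i^j\cap E_0=C_i=E_0\setminus(E_0\cap V_{A_i})$ and pairwise intersections equal to $C_i$. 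Collecting these families over $i\in[p]$ yields a semi-$(\vec{a},\vec{c})$-$\Delta$-system with host $E_0$ and $c_1=\cdots=c_p=s-1\ge kd$.

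Finally, because $c_1\ge 1$ and $c_i\ge kd$ for all $i$, Lemma \ref{LEMMA-semi-implies-nonsemi} applies and extracts an $(\vec{a},d)$-$\Delta$-system inside this semi-$(\vec{a},\vec{c})$-$\Delta$-system, hence inside $\mathcal{H}$, completing the proof. The only subtle point (and the main thing to verify carefully) is the identification of the set in $\mathcal{I}(E_0,\mathcal{H})$ with projection $[k]\setminus A_i$ as exactly $E_0\setminus(E_0\cap V_{A_i})$, which relies on $k$-partiteness; everything else is a mechanical assembly of the centers $C_i$ into the desired semi-$\Delta$-system structure.
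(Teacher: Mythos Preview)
Your proposal is correct and follows essentially the same route as the paper: deduce $\mathcal{J}=2^{[k]}\setminus\{[k]\}$ from $r(\mathcal{J})=k$, use homogeneity to get $\Delta$-systems of size $s$ centered at each $E_0\setminus A_i$, and then invoke Lemma~\ref{LEMMA-semi-implies-nonsemi}. The extra care you take in identifying the set with projection $[k]\setminus A_i$ via $k$-partiteness is a slight elaboration of what the paper leaves implicit, but the argument is the same.
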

\begin{proof}
Since $r(\mathcal{J}) = k$, $\mathcal{J} = 2^{[k]}\setminus \{[k]\}$.
Let $E\in \mathcal{H}$ and let $\bigcup_{i=1}^{p}A_i = E$ be an $\vec{a}$-partition of $E$.
Since $\Pi(\mathcal{I}(E,\mathcal{H})) = \mathcal{J}$, we have $E\setminus A_i \in \mathcal{I}(E,\mathcal{H})$ for all $i \in [p]$.
Since $\mathcal{H}$ is $s$-homogeneous, there exists a $\Delta$-system $\mathcal{D}_i$ of size $s$ with center $E\setminus A_i$ for $i \in [p]$.
By assumption, $s \ge kd+1$, therefore, by Lemma \ref{LEMMA-semi-implies-nonsemi}, $\mathcal{H}$ contains an $(\vec{a},d)$-$\Delta$-system.
\end{proof}

Lemmas \ref{LEMMA-rank-J-less-k} and \ref{LEMMA-rank-up-bound}, and Theorem \ref{THM-FU83} implies that following proposition.

\begin{proposition}\label{PROP-shadow-H-H-F-free}
Let $d \ge p \ge 2$, $k > p$,
and $\vec{a} = (a_1,\ldots, a_p)$
be a sequence of positive integers with $\sum_{i=1}^{p}a_i = k$.
Let $\mathcal{H} \subset \binom{[n]}{k}$ be a hypergraph that contains no $(\vec{a},d)$-$\Delta$-systems.
Then there exists a constant $c(k,d) > 0$ such that
$|\partial\mathcal{H}| \ge c(k,d)|\mathcal{H}|$.
\end{proposition}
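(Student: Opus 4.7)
The plan is to combine Theorem \ref{THM-FU83} with the two preceding lemmas, and then bridge the gap between a possibly deep shadow and the first shadow by an elementary iterated double-counting bound.

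First, I would apply Theorem \ref{THM-FU83} with $s = kd+1$ to extract an $s$-homogeneous subgraph $\mathcal{H}^{\ast} \subset \mathcal{H}$ with $|\mathcal{H}^{\ast}| \ge c(k,kd+1)\,|\mathcal{H}|$. Since $\mathcal{H}^{\ast}$ is a subgraph of $\mathcal{H}$, it also contains no $(\vec{a},d)$-$\Delta$-system, so the contrapositive of Lemma \ref{LEMMA-rank-J-less-k} forces the intersection pattern $\mathcal{J}$ of $\mathcal{H}^{\ast}$ to satisfy $r := r(\mathcal{J}) \le k-1$. Lemma \ref{LEMMA-rank-up-bound} then yields $|\mathcal{H}^{\ast}| \le |\partial_{k-r}\mathcal{H}^{\ast}|$ with $k-r \ge 1$.

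To return from the $(k-r)$-th shadow to the first shadow, I will use the trivial estimate $|\partial_{i}\mathcal{H}^{\ast}| \le (k-i+1)\,|\partial_{i-1}\mathcal{H}^{\ast}|$, which follows because each set in $\partial_{i-1}\mathcal{H}^{\ast}$ has size $k-i+1$ and hence exactly $k-i+1$ subsets of size $k-i$. Iterating this from $i = k-r$ down to $i = 2$ gives $|\partial_{1}\mathcal{H}^{\ast}| \ge |\partial_{k-r}\mathcal{H}^{\ast}|/C_k$ for a constant $C_k \le (k-1)!$ depending only on $k$. Chaining the inequalities together and using $\partial_{1}\mathcal{H}^{\ast} \subset \partial\mathcal{H}$ gives $|\partial\mathcal{H}| \ge c(k,d)\,|\mathcal{H}|$ with $c(k,d) := c(k,kd+1)/C_k$.

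There is no real obstacle to overcome here: the substantive content sits inside Theorem \ref{THM-FU83} and Lemma \ref{LEMMA-rank-J-less-k}, and the role of this proposition is simply to package them. The only point that requires a small amount of care is choosing $s$ large enough to trigger the $(\vec{a},d)$-$\Delta$-system extraction in Lemma \ref{LEMMA-rank-J-less-k} (any $s \ge kd+1$ suffices, matching the hypothesis of that lemma), and then the shadow-collapsing inequality above to avoid losing the factor that appears when $r$ is small compared to $k$.
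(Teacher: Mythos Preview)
Your argument is correct and follows the same route as the paper: apply Theorem \ref{THM-FU83} with $s=kd+1$, use Lemma \ref{LEMMA-rank-J-less-k} to force $r(\mathcal{J})\le k-1$, then Lemma \ref{LEMMA-rank-up-bound} and $\partial\mathcal{H}^{\ast}\subset\partial\mathcal{H}$. The only difference is that your step~4 (collapsing $\partial_{k-r}$ back to $\partial_1$ by iterated double counting) is unnecessary: once there is an $r$-set $S\subset E$ with $r\le k-1$ not contained in any other edge, any $(k-1)$-set $T$ with $S\subset T\subset E$ is likewise unique to $E$, so $|\mathcal{H}^{\ast}|\le|\partial_1\mathcal{H}^{\ast}|$ follows directly and the factor $C_k$ can be dropped.
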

\begin{proof}
Let $s = kd+1$ and $\mathcal{H}^{\ast}$ be a maximum $s$-homogeneous subgraph of $\mathcal{H}$ with intersection pattern $\mathcal{J}$.
Then by Theorem \ref{THM-FU83}, $|\mathcal{H}^{\ast}| \ge c(k,d) |\mathcal{H}|$ for some constant $c(k,d) > 0$.
Since $\mathcal{H}^{\ast}$ contains no $(\vec{a},d)$-$\Delta$-systems, by Lemma \ref{LEMMA-rank-J-less-k}, $r(\mathcal{J}) \le k-1$.
So by Lemma \ref{LEMMA-rank-up-bound}, $|\mathcal{H}^{\ast}| \le |\partial\mathcal{H}^{\ast}|$.
Therefore, $|\partial\mathcal{H}| \ge |\partial\mathcal{H}^{\ast}| \ge |\mathcal{H}^{\ast}| \ge c(k,d)|\mathcal{H}|$.
\end{proof}

The next lemma gives another condition that guarantees a hypergraph to contain an $(\vec{a},d)$-$\Delta$-system as a subgraph.

\begin{lemma}\label{LEMMA-H-covered-Ai}
Let $d \ge p \ge 2$, $k > p$, $s \ge kd+1$, and $\vec{a} = (a_1,\ldots, a_p)$
be a sequence of positive integers with $\sum_{i=1}^{p}a_i = k$.
Let $\mathcal{H} \subset \binom{[n]}{k}$ and $\mathcal{H}^{\ast}$ be a $s$-homogeneous subgraph of $\mathcal{H}$.
Let $E_0 \in \mathcal{H}^{\ast}$ and $\bigcup_{i\in [p]}A_i = E_0$ be an $\vec{a}$-partition of $E_0$.
Suppose that there exists $i_0 \in [p]$ such that $E\setminus A_i \in \mathcal{I}(E,\mathcal{H}^{\ast})$ for all $i \in [p]\setminus\{i_0\}$,
and there exists $E\in \mathcal{H}$ such that $E \cap E_0 = E_0\setminus A_{i_0}$,
Then $\mathcal{H}$ contains an $(\vec{a},d)$-$\Delta$-system.
\end{lemma}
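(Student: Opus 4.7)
The plan is to combine the $s$-homogeneous structure of $\mathcal{H}^*$ with the single extra edge $E$ to build a large semi-$(\vec{a},\vec{c})$-$\Delta$-system inside $\mathcal{H}$, and then invoke Lemma \ref{LEMMA-semi-implies-nonsemi} to extract an $(\vec{a},d)$-$\Delta$-system. The point is that the homogeneity of $\mathcal{H}^*$ furnishes, for every index $i \neq i_0$, a $\Delta$-system of size $s$ in $\mathcal{H}^*$ with center $E_0 \setminus A_i$ that contains $E_0$; the edge $E$ plays the role of supplying the single "missing" center $E_0 \setminus A_{i_0}$, which is not guaranteed by the homogeneous structure.

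Concretely, for each $i \in [p]\setminus\{i_0\}$, the assumption $E_0 \setminus A_i \in \mathcal{I}(E_0,\mathcal{H}^*)$ together with $s$-homogeneity produces edges $E_i^1,\dots,E_i^{s-1} \in \mathcal{H}^*$ such that $\{E_0, E_i^1,\dots,E_i^{s-1}\}$ is a $\Delta$-system of size $s$ with center $E_0 \setminus A_i$. Setting $E_{i_0}^1 := E$ together with $c_{i_0} := 1$ and $c_i := s-1$ for $i \neq i_0$, the family
\[
\{E_0\} \cup \{E_i^j : i \in [p],\ j \in [c_i]\}
\]
is a semi-$(\vec{a},\vec{c})$-$\Delta$-system with host $E_0$; the requirement at coordinate $i_0$ reduces to the trivial $\Delta$-system $\{E_0,E\}$, whose intersection is $E_0 \setminus A_{i_0}$ by hypothesis.

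After relabeling the partition so that $i_0$ becomes the first index (which only permutes the coordinates of $\vec{a}$ and $\vec{c}$, and does not affect whether the final family is an $(\vec{a},d)$-$\Delta$-system, since that notion is invariant under relabeling of the partition of $E_0$), the resulting sequence satisfies $c_1 = 1$ and $c_i = s-1 \ge kd$ for $2 \le i \le p$. The "in particular" clause of Lemma \ref{LEMMA-semi-implies-nonsemi} then yields an $(\vec{a},d)$-$\Delta$-system contained in $\mathcal{H}$, as required.

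The argument is essentially bookkeeping, and I do not expect a real obstacle. The two points to watch are: (i) that dropping $E_0$ from each size-$s$ $\Delta$-system leaves $s-1 \ge kd$ spoke-edges, which is exactly why the hypothesis $s \ge kd+1$ is present and which makes Lemma \ref{LEMMA-semi-implies-nonsemi} applicable; and (ii) that permuting the coordinates to move $i_0$ into position $1$ is legitimate, since an $(\vec{a},d)$-$\Delta$-system is just a structured family of sets and the indexing $i \in [p]$ is merely a label for the pieces of the $\vec{a}$-partition of the host.
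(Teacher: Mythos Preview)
Your proposal is correct and follows essentially the same route as the paper: the paper assumes without loss of generality that $i_0 = 1$ (your relabeling step), observes that $E_0\setminus A_i$ is the center of a $\Delta$-system of size $s$ in $\mathcal{H}^{\ast}$ for $i\ge 2$ while $E_0\setminus A_1$ is the center of the size-$2$ $\Delta$-system $\{E_0,E\}$, and then invokes Lemma~\ref{LEMMA-semi-implies-nonsemi}. Your careful bookkeeping with $c_i = s-1 \ge kd$ is exactly why the hypothesis $s\ge kd+1$ is needed, and your remark about the harmlessness of the coordinate permutation is a valid justification of the ``without loss of generality'' step that the paper leaves implicit.
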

\begin{proof}
Without loss of generality, we may assume that $i_0=1$.
By assumption, $E\setminus A_i$ is the center of $\Delta$-system of size $s\ge kd +1$
in $\mathcal{H}^{\ast}\subset \mathcal{H}$ for $2 \le i \le k$,
and $E_0\setminus A_1$ is the center of a $\Delta$-system of size $2$ in $\mathcal{H}$, i.e. $\{E_0, E\}$.
Therefore, by Lemma \ref{LEMMA-semi-implies-nonsemi}, $\mathcal{H}$ contains an $(\vec{a},d)$-$\Delta$-system.
\end{proof}

\begin{lemma}\label{LEMMA-large-intersection}
Let $d\ge p \ge 2$, $k > p$, $s\ge kd+1$, and $\vec{a} = (a_1,\ldots, a_p)$
be a sequence of positive integers with $\sum_{i=1}^{p}a_i = k$.
Let $\mathcal{H} \subset \binom{[n]}{k}$ be a hypergraph
that does not contain $(\vec{a},d)$-$\Delta$-systems.
Let $\mathcal{H}^{\ast}$ be a $s$-homogeneous subgraph of $\mathcal{H}$ with intersection pattern $\mathcal{J}$.
Suppose that $r(\mathcal{J}) = k-1$ and $\mathcal{J}$ contains exactly $k-1$ $(k-1)$-sets.
Let $v\in E \in \mathcal{H}^{\ast}$ be the vertex that is contained in all $(k-1)$-sets in
$\mathcal{I}(E,\mathcal{H}^{\ast})$.
Then $v \in F$ for all $F \in \mathcal{H}$ that satisfies $|F\cap E|\ge k-a_1$
\end{lemma}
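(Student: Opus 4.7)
The plan is to derive a contradiction from the assumption $v\notin F$ by constructing an $(\vec{a},d)$-$\Delta$-system in $\mathcal{H}$ through Lemma \ref{LEMMA-H-covered-Ai}. A naive attempt would use $E$ itself as the host, but Lemma \ref{LEMMA-H-covered-Ai} demands the equality $F\cap E_0=E_0\setminus A_{i_0}$, which fails when $|F\cap E|>k-a_1$. So the main idea is to first replace $E$ by a new host $E''\in\mathcal{H}^{\ast}$ still containing $v$ but with $|F\cap E''|=k-a_1$ exactly, and then feed this host into Lemma \ref{LEMMA-H-covered-Ai}. Before the construction, I would record the shape of $\mathcal{J}$: since exactly one $(k-1)$-subset of $[k]$ is missing from $\mathcal{J}$, namely $[k]\setminus\{w\}$ where $w$ is the coordinate of $v$, closure of $\mathcal{J}$ under intersection forces every proper nonempty $A\subseteq[k]$ with $w\in A$ to lie in $\mathcal{J}$.

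Assume for contradiction $v\notin F$ and set $S=E\setminus F$, $t=|S|$; then $v\in S$ and $1\le t\le a_1$. The heart of the argument is an iterative reduction producing a chain $E=E^{(0)},E^{(1)},\dots,E^{(a_1-t)}$ of edges in $\mathcal{H}^{\ast}$, each containing $v$, with $|F\cap E^{(j)}|=k-t-j$. At step $j$, while $|F\cap E^{(j)}|>k-a_1$, I would pick any $u\in F\cap E^{(j)}$; since $v\notin F$, $u\in V_i$ for some $i\neq w$. The $(k-1)$-set $\Pi(E^{(j)}\setminus\{u\})=[k]\setminus\{i\}$ lies in $\mathcal{J}$, so the $s$-homogeneity of $\mathcal{H}^{\ast}$ provides a $\Delta$-system of size $s$ in $\mathcal{H}^{\ast}$ with center $E^{(j)}\setminus\{u\}$ containing $E^{(j)}$. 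Its $s-1$ other edges have pairwise distinct petals in $V_i$, and at most $|F\cap V_i|\le k$ of these land in $F$; since $s-1\ge kd\ge 2k$, I may choose some $E^{(j+1)}$ from this $\Delta$-system whose $V_i$-vertex is outside $F$, with the $V_w$-vertex untouched and still equal to $v$.

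Set $E''=E^{(a_1-t)}$, so $E''\in\mathcal{H}^{\ast}$, $v\in E''$, and $|F\cap E''|=k-a_1$. Take $A_1=E''\setminus F$, which has size exactly $a_1$ and contains $v$, and partition $E''\cap F$ into $A_2,\dots,A_p$ with $|A_i|=a_i$ to complete an $\vec{a}$-partition of $E''$. For each $i\ge 2$, $v\notin A_i$, so $w\in\Pi(E''\setminus A_i)$, whence $\Pi(E''\setminus A_i)\in\mathcal{J}$ and therefore $E''\setminus A_i\in\mathcal{I}(E'',\mathcal{H}^{\ast})$. Combined with $F\cap E''=E''\setminus A_1$, Lemma \ref{LEMMA-H-covered-Ai} (with host $E''$ and $i_0=1$) yields an $(\vec{a},d)$-$\Delta$-system in $\mathcal{H}$, contradicting the hypothesis. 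The main obstacle is guaranteeing that each reduction step is available, which rests on the gap between $s\ge kd+1$ and the trivial ceiling $|F\cap V_i|\le k$; this is exactly why the $s$-homogeneity is tuned at $s\ge kd+1$ in Lemma \ref{LEMMA-rank-J-less-k}.
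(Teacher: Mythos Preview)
Your proof is correct and follows essentially the same strategy as the paper's: assume $v\notin F$, pass to a host edge $E''\in\mathcal{H}^{\ast}$ still containing $v$ but with $|E''\cap F|=k-a_1$ exactly, and then build the forbidden $(\vec{a},d)$-$\Delta$-system via Lemma~\ref{LEMMA-H-covered-Ai}. The only difference is cosmetic: the paper reaches $E''$ in a single step, using the $\Delta$-system centered at a set $M$ with $E\setminus F\subset M\subset E$ and $|M|=k-a_1+t$ (and picking a petal disjoint from $F$), whereas you iterate $a_1-t$ times through $(k-1)$-set centers.
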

\begin{proof}
Let $E = \{v_1,\ldots,v_k\} \in \mathcal{H}^{\ast}$ and suppose that $v_1$ is contained in all $(k-1)$-sets in
$\mathcal{I}(E,\mathcal{H}^{\ast})$.
Let $F \in \mathcal{H}$ and suppose that $|E \cap F| = k-t$ for some $1 \le t \le a_1$, but $v_1\not\in F$.
If $t = a_1$, then let $\bigcup_{i\in[p]}A_i = E$ be an $\vec{a}$-partition such that
$A_1 = E\setminus F$.
For $2\le i \le p$ since $v_1\in E\setminus A_i$, $E\setminus A_i \in \mathcal{I}(E,\mathcal{H}^{\ast})$.
Therefore, $E\setminus A_i$ is the canter of a $\Delta$-system of size $s$ in $\mathcal{H}^{\ast}$ for $2 \le i \le p$.
Since $E\setminus A_1$ is the center of a $\Delta$-system of size $2$, i.e. $\{E,F\}$,
by Lemma \ref{LEMMA-semi-implies-nonsemi}, $\mathcal{H}$ contains an $(\vec{a},d)$-$\Delta$-system, a contradiction.
So, $t < a_1$.

Let $M\subset E$ such that $E\setminus F\subset M$ and $|M| = k-a_1+t$.
Since $|M|\le k-1$ and $v_1 \in M$, $M\in \mathcal{I}(E,\mathcal{H}^{\ast})$.
Therefore, $M$ is the center of a $\Delta$-system of size $s$ in $\mathcal{H}^{\ast}$,
which means that there exists $E_1\in \mathcal{H}^{\ast}$ such that $E_{1}\cap E = M$ and $(E_1\setminus E)\cap F = \emptyset$.
This implies that $E_1\cap F= M\setminus(E\setminus F)$ and $|E_1\cap F| = k-a_1$.
Since $\Pi(\mathcal{I}(E_1,\mathcal{H}^{\ast})) = \Pi(\mathcal{I}(E,\mathcal{H}^{\ast}))$,
applying the same argument as above to $E_1$ and $F$ we obtain that
$\mathcal{H}$ contains an $(\vec{a},d)$-$\Delta$-system, a contradiction.
\end{proof}

For a hypergraph $\mathcal{H}$ and $E\in \mathcal{H}$ the weight of $E$ is
\begin{align}
\omega_{\mathcal{H}}(E) := \sum_{E' \subset E, |E'|=k-1} \frac{1}{\deg_{\mathcal{H}}(E')}, \notag
\end{align}
where $\deg_{\mathcal{H}}(E')$ is the number of edges in $\mathcal{H}$ containing $E'$.
We have the following identity:
\begin{align}\label{EQU-weight-sum}
\sum_{E\in \mathcal{H}} \omega_{\mathcal{H}}(E)
= \sum_{E\in \mathcal{H}}\sum_{E' \subset E, |E'|=k-1} \frac{1}{\deg_{\mathcal{H}}(E')}
= \sum_{E' \in \partial\mathcal{H}}\sum_{E\in \mathcal{H}, E'\subset E}\frac{1}{\deg_{\mathcal{H}}(E')}
= |\partial\mathcal{H}|.
\end{align}

The following lemma gives a lower bound for $\omega_{\mathcal{H}}(E)$ regarding the intersection structure of $E$
in a homogeneous subgraph of $\mathcal{H}$.

\begin{lemma}\label{LEMMA-rank-k-1}
Let $d\ge p \ge 2$, $k > p$, $s\ge kd+1$, and $\vec{a} = (a_1,\ldots, a_p)$
be a sequence of positive integers with $\sum_{i=1}^{p}a_i = k$.
Let $\mathcal{H} \subset \binom{[n]}{k}$ be a hypergraph
that does not contain $(\vec{a},d)$-$\Delta$-systems.
Let $\mathcal{H}^{\ast}$ be a $s$-homogeneous subgraph of $\mathcal{H}$ with intersection pattern $\mathcal{J}$.
Suppose that $r(\mathcal{J}) = k-1$. Then the followings hold.
\begin{itemize}
\item[(1)] If $\mathcal{J}$ contains exactly $k-1$ $(k-1)$-sets,
            then every $E\in \mathcal{H}^{\ast}$ contains a $(k-1)$-subset that is not contained in any other edges in $\mathcal{H}$.
            In particular, $\omega_{\mathcal{H}}(E) \ge 1$ for all $E\in \mathcal{H}^{\ast}$.
\item[(2)] If $\mathcal{J}$ contains at most $k-2$ $(k-1)$-sets,
            then $\omega_{\mathcal{H}}(E) \ge \frac{k}{k-1}$ for all $E\in \mathcal{H}^{\ast}$.
\end{itemize}
\end{lemma}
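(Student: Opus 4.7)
The plan is to lower-bound $\omega_{\mathcal{H}}(E) = \sum_{E' \subset E,\, |E'| = k-1} 1/\deg_{\mathcal{H}}(E')$ by exhibiting $(k-1)$-subsets of $E$ that are contained in exactly one edge of $\mathcal{H}$, each contributing $1$ to the sum.

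For part (1), since $\mathcal{J}$ contains exactly $k-1$ sets of size $k-1$, a unique index $j_0 \in [k]$ is missing, so $j_0$ lies in every $(k-1)$-set of $\mathcal{J}$. Letting $v := E \cap V_{j_0}$, Lemma~\ref{LEMMA-large-intersection} (applied with this $v$ as the distinguished vertex) asserts that every $F \in \mathcal{H}$ with $|F \cap E| \geq k - a_1$ contains $v$. Any $F \supset E \setminus \{v\}$ satisfies $|F \cap E| \geq k-1 \geq k - a_1$ (since $a_1 \geq 1$), so $v \in F$, forcing $E \subseteq F$ and hence $F = E$. Therefore $\deg_{\mathcal{H}}(E \setminus \{v\}) = 1$, contributing $1$ to $\omega_{\mathcal{H}}(E)$; as the remaining $k-1$ terms are nonnegative, $\omega_{\mathcal{H}}(E) \geq 1$.

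For part (2), set $S := \{j \in [k] : [k] \setminus \{j\} \notin \mathcal{J}\}$, so by hypothesis $|S| \geq 2$. The plan is to show $\deg_{\mathcal{H}}(E \setminus \{v_j\}) = 1$ for every $j \in S$; this will yield $\omega_{\mathcal{H}}(E) \geq |S| \geq 2 \geq k/(k-1)$, using $k \geq 3$. Suppose toward contradiction there exists $F \in \mathcal{H} \setminus \{E\}$ with $F \supset E \setminus \{v_j\}$ for some $j \in S$. The strategy is to adapt the proof of Lemma~\ref{LEMMA-large-intersection}: any $v_{j'}$ with $j' \in S$ belongs to every $(k-1)$-set of $\mathcal{J}$, and by the closure of $\mathcal{J}$ under intersection every subset of $[k]$ of size at most $k-1$ that contains $S$ lies in $\mathcal{J}$. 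The plan is to choose an $\vec{a}$-partition $(A_1, \ldots, A_p)$ of $E$ placing the vertex set $V_S := \{v_{j'} : j' \in S\}$ in a single part so that $[k] \setminus \Pi(A_i) \supseteq S$ for all remaining $i$; this forces $E \setminus A_i \in \mathcal{I}(E, \mathcal{H}^{\ast})$ for those $i$, and together with the $\Delta$-system $\{E, F\}$ of center $E \setminus \{v_j\}$, Lemma~\ref{LEMMA-H-covered-Ai} produces an $(\vec{a}, d)$-$\Delta$-system, a contradiction.

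The main obstacle is the case $|S| > a_i$ for every $i$, where no single part of the $\vec{a}$-partition can accommodate all of $V_S$; here the direct adaptation fails and one must fall back on the recursive $t < a_1$ subcase of Lemma~\ref{LEMMA-large-intersection}, carefully choosing the intermediate set $M \subset E$ so that $V_S \subseteq M$ while preserving $|M| = k - a_1 + t \leq k-1$. Balancing these constraints, possibly via an iterative argument that replaces $E$ by a newly found $E_1 \in \mathcal{H}^{\ast}$ with $|E_1 \cap F| = k - a_1$ and exploits that $\Pi(\mathcal{I}(E_1, \mathcal{H}^{\ast})) = \mathcal{J}$, is the heart of the proof.
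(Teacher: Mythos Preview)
Your argument for part~(1) is correct and matches the paper's.

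For part~(2), however, there is a genuine gap. Your plan is to show the strong claim that $\deg_{\mathcal{H}}(E\setminus\{v_j\})=1$ for every $j\in S$, by placing $V_S$ inside a single block $A_{i_0}$ and then invoking Lemma~\ref{LEMMA-H-covered-Ai} with the pair $\{E,F\}$. But Lemma~\ref{LEMMA-H-covered-Ai} requires the extra edge to satisfy $F\cap E = E\setminus A_{i_0}$, whereas your $F$ satisfies $F\cap E = E\setminus\{v_j\}$, a set of size $k-1$. Since $V_S\subseteq A_{i_0}$ forces $|A_{i_0}|\ge |S|\ge 2$, these two conditions are incompatible; the lemma does not apply. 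Your suggested iterative fix (pass to some $E_1\in\mathcal{H}^{\ast}$ with $|E_1\cap F|=k-a_1$) does not close the gap in general: take for instance $\vec{a}=(2,1,\ldots,1)$ with $t=|S|\ge a_1+2$. After the iteration one must distribute the $t-1$ remaining elements of $V_S$ among the $p-1$ singleton blocks $A_2,\ldots,A_p$, and each such block $A_i=\{v_{j'}\}$ with $j'\in S$ gives $[k]\setminus\Pi(A_i)=[k]\setminus\{j'\}\notin\mathcal{J}$, so $E_1\setminus A_i$ is not known to lie in $\mathcal{I}(E_1,\mathcal{H}^{\ast})$. A single edge $F$ cannot cover more than one of these bad blocks.

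The paper does \emph{not} attempt to prove $\deg_{\mathcal{H}}(E\setminus\{v_j\})=1$. Instead it proves the weaker matching-type statement (Claim~\ref{CLAIM-no-t-1-edges}): one cannot find $t-1$ edges $F_i=(E\setminus\{v_i\})\cup\{u_i\}\in\mathcal{H}$, $i$ ranging over a $(t-1)$-subset of $S$, with \emph{distinct} vertices $u_i$. This claim genuinely uses several of the $F_i$ simultaneously in the case analysis (for example when $a_{i_0}=1$). It is then fed into K\"onig--Hall: the bipartite graph on $\{v_i:i\in S\}$ versus $[n]\setminus E$ has matching number at most $t-2$, hence a vertex cover of size $\le t-2$, and for the $\ell\ge 2$ uncovered $v_i$'s one gets $\deg_{\mathcal{H}}(E\setminus\{v_i\})\le \ell-1$, yielding $\omega_{\mathcal{H}}(E)\ge \ell/(\ell-1)\ge k/(k-1)$. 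This is the key combinatorial step you are missing.
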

\begin{proof}
We prove $(1)$ first.
We may assume that $a_1 \ge \cdots \ge a_k$, and note that $a_1 \ge 2$ since $\sum_{i=1}^{p}a_i = k>p$.
Let $E = \{v_1,\ldots,v_{k}\} \in \mathcal{H}^{\ast}$.
Since $\Pi(\mathcal{I}(E,\mathcal{H}^{\ast})) = \mathcal{J}$, by assumption,
there are exactly $k-1$ $(k-1)$-sets in $\mathcal{I}(E,\mathcal{H}^{\ast})$.
Without loss of generality we may assume that $E\setminus\{v_i\} \in \mathcal{I}(E,\mathcal{H}^{\ast})$ for $2 \le i \le k$.
We claim that $\{v_2,\ldots,v_k\}$ is not contain in any set in $\mathcal{H}\setminus \{E\}$.
Indeed, if there exists $E_1 \in \mathcal{H}$ such that $\{v_2,\ldots,v_k\} \subset E_1$,
then $|E_1\cap E| \ge k-1$.
So, by Lemma \ref{LEMMA-large-intersection}, $v_1\in E_1$, which implies that $E_1=E$.

Now we prove $(2)$.
Suppose that $\mathcal{J}$ has exactly $k-t$ $(k-1)$-sets for some $2\le t \le k$.
Let $E = \{v_1,\ldots,v_{k}\} \in \mathcal{H}^{\ast}$.
Without loss of generality, we may assume that $E\setminus\{v_i\} \in \mathcal{I}(E,\mathcal{H}^{\ast})$ for $t+1 \le i \le k$.

\begin{claim}\label{CLAIM-no-t-1-edges}
There does not exist a $(t-1)$-set $I\subset [t]$ and $t-1$ distinct vertices $\{u_i: i\in I\}$,
such that $(E\setminus\{v_i\})\cup \{u_i\} \in \mathcal{H}$ for all $i\in I$.
\end{claim}
\begin{proof}[Proof of Claim \ref{CLAIM-no-t-1-edges}]
Suppose not, and without loss of generality we may assume that
$F_i:= (E\setminus\{v_i\})\cup \{u_i\} \in \mathcal{H}$ for all $2 \le i \le t$,
where $u_2,\ldots,u_t$ are distinct vertices.

By assumption $\mathcal{I}(E,\mathcal{H}^{\ast})$ contains all $(k-1)$-sets that contain $\{v_1,\ldots,v_t\}$.
Since $\mathcal{I}(E,\mathcal{H}^{\ast})$ is closed under intersection, $\mathcal{I}(E,\mathcal{H}^{\ast})$ contains
all proper subsets of $E$ that contain $\{v_1,\ldots,v_t\}$,
i.e. if $A\subset \{v_{t+1},\ldots,v_k\}$, then $E\setminus A \in \mathcal{I}(E,\mathcal{H}^{\ast})$.

On the other hand, since $r(\mathcal{I}(E,\mathcal{H}^{\ast})) = k-1 \ge k-2$
and $E\setminus\{v_i\}, E\setminus\{v_j\} \not\in \mathcal{I}(E,\mathcal{H}^{\ast})$ for $i,j \in [t]$,
we have $E\setminus\{v_i,v_j\} \in \mathcal{I}(E,\mathcal{H}^{\ast})$ for all $\{i,j\}\subset [t]$.
This together with the previous argument and the property that $\mathcal{I}(E,\mathcal{H}^{\ast})$ is closed under intersection
imply that if $|A \cap \{v_1,\ldots,v_t\}| \ge 2$, then $E\setminus A \in \mathcal{I}(E,\mathcal{H}^{\ast})$.

Let $i_0 \in [p]$ such that $\sum_{i=1}^{i_0-1}a_i < t \le \sum_{i=1}^{i_0}a_i$,
and let $\ell = t - \sum_{i=1}^{i_0-1}a_i$.
Recall that $a_1 \ge \cdots \ge a_p\ge 1$ and $a_1 \ge 2$.
Suppose that $\ell \ge 2$.
Then there exists an $\vec{a}$-partition $E = \bigcup_{i\in[p]}A_i$
such that $A_1, \ldots, A_{i_0-1} \subset \{v_1, \ldots, v_t\}$, $|A_{i_0} \cap \{v_1, \ldots, v_t\}|\ge \ell \ge 2$,
and $A_{i_0+1},\ldots, A_p \subset \{v_{t+1},\ldots, v_k\}$.
Since $a_1 \ge \cdots \ge a_{i_0-1} \ge a_{i_0} \ge 2$, by the argument above, $E\setminus A_{i} \in \mathcal{I}(E,\mathcal{H}^{\ast})$
for all $i\in [p]$.
Therefore, $E\setminus A_{i}$ is the center of a $\Delta$-system of size $s \ge kd +1$ in $\mathcal{H}^{\ast}$ for $i\in [p]$,
so by Lemma \ref{LEMMA-semi-implies-nonsemi}, $\mathcal{H}$ contains an $(\vec{a},d)$-$\Delta$-system, a contradiction.
Therefore, $\ell = 1$.

Suppose that $a_{i_0} = 1$.
Then let $E = \bigcup_{i\in[p]}A_i$ be an $\vec{a}$-partition
such that $\bigcup_{i \in [i_0]}A_i = \{v_1,\ldots, v_t\}$ and $v_1 \in A_1$.
Since $A_1 \subset \{v_1,\ldots, v_t\}$ and $|A_1| \ge 2$, $E\setminus A_1 \in \mathcal{I}(E,\mathcal{H}^{\ast})$.
So $E\setminus A_1$ is the center of a $\Delta$-system of size $s$.
Without loss of generality we may assume that $a_2 = \cdots = a_{i_0} = 1$ since other cases can be proved similarly.
For $i_0+1 \le i \le p$ since $A_{i}\subset \{v_{t+1},\ldots,v_k\}$,
$E\setminus A_{i} \in \mathcal{I}(E,\mathcal{H}^{\ast})$.
So $E\setminus A_{i}$ is is the center of a $\Delta$-system of size $s$ for $i_0+1 \le i \le p$.
Notice that by assumption for every $2 \le i \le i_0$ there exists $F_{j_i} \in \mathcal{H}$
such that $F_{j_i} \cap E = E\setminus\{v_{j_i}\}$ for $2 \le j_i \le t$,
and moreover, $\{F_{j_i} \setminus E: 2 \le i \le i_0\}$ are distinct.
Therefore, by a similar argument as in the proof of Lemma \ref{LEMMA-semi-implies-nonsemi},
$\mathcal{H}$ contains an $(\vec{a},d)$-$\Delta$-system, a contradiction.
Therefore, $a_{i_0} \ge 2$.

Suppose that $a_1 \ge 3$.
Then let $E = \bigcup_{i\in[p]}A_i$ be an $\vec{a}$-partition
such that $A_2 \cup \cdots \cup A_{i_0-1} \subset \{v_1,\ldots, v_t\}$,
$v_{t+1}\in A_1$ and $A_1\setminus\{v_{t+1}\}\subset \{v_1,\ldots, v_t\}$,
and $\{v_1,\ldots, v_t\} \setminus \left(\bigcup_{i\in [i_0-1]}A_i\right) \subset A_{i_0}$.
Then $|A_i \cap \{v_1,\ldots, v_t\}| \ge 2$ for all $i \in [i_0]$
and $A_{j} \subset \{v_{t+1},\ldots,v_k\}$ for all $i_0+1 \le j \le p$.
Therefore, $E\setminus A_{i}$ is the center of a $\Delta$-system of size $s \ge kd +1$ in $\mathcal{H}^{\ast}$ for $i\in [p]$,
so by Lemma \ref{LEMMA-semi-implies-nonsemi}, $\mathcal{H}$ contains an $(\vec{a},d)$-$\Delta$-system, a contradiction.
Therefore, $a_1 = 2$.

Suppose that $a_{p} = 1$.
Then let $E = \bigcup_{i\in[p]}A_i$ be an $\vec{a}$-partition
such that $A_1 \cup \cdots \cup A_{i_0-1} = \{v_1,\ldots, v_{t-1}\}$ and $A_{p} = \{v_t\}$.
Then $E\setminus A_{i}$ is the center of a $\Delta$-system of size $s \ge kd +1$ in $\mathcal{H}^{\ast}$ for $i\in [p-1]$
and $E\setminus A_p$ is the center of a $\Delta$-system of size $2$ in $\mathcal{H}$, i.e. $\{E,F_{t}\}$.
Therefore, by Lemma \ref{LEMMA-semi-implies-nonsemi}, $\mathcal{H}$ contains an $(\vec{a},d)$-$\Delta$-system, a contradiction.
Therefore, $a_p = 2$.

Now we have $a_1 = \cdots = a_p =2$ and $\ell = 1$.
Then $t$ is odd, $t\ge 3$, and $k$ is even, $k> t$.
Since $E\setminus\{v_k\} \in \mathcal{I}(E,\mathcal{H}^{\ast})$,
$E\setminus\{v_k\}$ is the center of a $\Delta$-system of size $s$ in $\mathcal{H}^{\ast}$.
So there exists $F_k:= (E\setminus\{v_k\})\cup\{u_{k}\} \in \mathcal{H}^{\ast}$
such that $u_{k} \not\in \{u_2,\ldots,u_t\}$.
Let $F_k = \bigcup_{i\in[p]}A_i$ be an $\vec{a}$-partition
such that $A_1 = \{v_2,u_k\}$, $A_2 = \{v_1,v_3\}$, and $A_{i} = \{v_{2i-2},v_{2i-1}\}$ for $3\le i \le p$.
Then for every $i \in [p]\setminus\{1\}$, either $A_i \subset \{v_1,\ldots,v_t\}$ or $A_i \subset \{v_{t+1},\ldots,v_{k-1}\}$.
Since $\Pi(\mathcal{I}(F_k,\mathcal{H}^{\ast})) = \Pi(\mathcal{I}(E,\mathcal{H}^{\ast}))$,
$F_k\setminus A_{i}$ is the center of a $\Delta$-system of size $s \ge kd +1$ in $\mathcal{H}^{\ast}$ for $i \in [p]\setminus\{1\}$.
Since $V\setminus A_1$ is the center of a $\Delta$-system of size $2$ in $\mathcal{H}$, i.e. $\{F_k,F_2\}$.
Therefore, by Lemma \ref{LEMMA-semi-implies-nonsemi}, $\mathcal{H}$ contains an $(\vec{a},d)$-$\Delta$-system, a contradiction.
This completes the proof of Claim \ref{CLAIM-no-t-1-edges}.
\end{proof}

Define a bipartite graph $G$ with two parts $L = \{v_1,\ldots,v_t\}$ and $R = [n]\setminus E$,
and for every $v_i \in L$ and $u \in R$, $v_i u$ is an edge in $G$ iff $(E \setminus v_i) \cup \{u\} \in \mathcal{H}$.
Claim \ref{CLAIM-no-t-1-edges} implies that there are at most $t-2$ pairwise disjoint edges in $G$.
Therefore, by the K\"{o}nig-Hall theorem, $G$ contains a vertex cover $S$ with $|S| \le t-2$.
Let $\ell = |L\setminus S| \ge 2$.
Then $|S\cap R| \le \ell-2$.
For every $v \in L\setminus S$
since $N_{G}(v) \subset S\cap R$,
we obtain
\begin{align}
\deg_{\mathcal{H}}(E\setminus\{v\}) = \deg_{G}(v) + 1 \le \ell-1, \notag
\end{align}
which implies that
\begin{align}
\omega_{\mathcal{H}}(E) =
\sum_{E'\subset E, |E'|=k-1}\frac{1}{\deg_{\mathcal{H}}(E')} >
\sum_{v \in L\setminus S} \frac{1}{\deg_{\mathcal{H}}(E\setminus \{v\})} \ge \frac{\ell}{\ell-1} \ge \frac{k}{k-1}. \notag
\end{align}
This completes the proof of Lemma \ref{LEMMA-rank-k-1}.
\end{proof}

\section{Proofs}
In this section we prove Theorems \ref{THM-xd-cluster-free} and \ref{THM-stability}.
First, let us prove Theorem \ref{THM-stability}.

\begin{proof}[Proof of Theorem \ref{THM-stability}]
Let $k > p \ge 2$, $d \ge p$, and $\vec{a} = (a_1, \ldots, a_p)$ be a sequence of integers such that
$a_1 \ge \cdots \ge a_p \ge 1$ and $\sum_{i\in [p]}a_i = k$.
Let $\epsilon > 0$ and $n$ be sufficiently large.
Let $\mathcal{H}\subset \binom{[n]}{k}$ be a hypergraph that contains no $(\vec{a},d)$-$\Delta$-systems
and $|\mathcal{H}| \ge (1-\epsilon)\binom{n-1}{k-1}$.

Let $s = kd+1$ and let $\mathcal{H}_{1}$ be a maximum $s$-homogeneous subgraph of $\mathcal{H}$.
Suppose now we have defined $\mathcal{H}_1,\ldots,\mathcal{H}_{i}$ for some $i \ge 1$.
Let $\mathcal{H}_{i+1}$ be the maximum $s$-homogeneous subgraph of $\mathcal{H}\setminus\left(\bigcup_{j=1}^{i}\mathcal{H}_j\right)$.
This process terminates if $\mathcal{H}\setminus\left(\bigcup_{j=1}^{m}\mathcal{H}_j\right) = \emptyset$
or the intersection pattern of $\mathcal{H}_{m+1}$ has rank at most $k-2$ for some $m\ge 1$.
Let $\mathcal{J}_{i}$ denote the intersection pattern of $\mathcal{H}_{i}$ for $i\in [m]$,
and note that by definition and Lemma \ref{LEMMA-rank-J-less-k}, $r(\mathcal{J}_i) = k-1$ for $i\in [m]$.
Let
\begin{align}
\widehat{\mathcal{H}}_{1} & = \bigcup_{i}\left\{\mathcal{H}_i:
                            i\in [m] \text{ and $\mathcal{J}_i$ contains exactly $k-1$ $(k-1)$-sets} \right\}, \notag\\
\widehat{\mathcal{H}}_{2} & = \bigcup_{i}\left\{\mathcal{H}_i:
                            i\in [m] \text{ and $\mathcal{J}_i$ contains at most $k-2$ $(k-1)$-sets} \right\}, \notag\\
\widehat{\mathcal{H}}_{3}
& = \mathcal{H}\setminus\left(\widehat{\mathcal{H}}_{1}\cup \widehat{\mathcal{H}}_{2}\right)
= \mathcal{H}\setminus\left(\bigcup_{i\in [m]}\mathcal{H}_i\right). \notag
\end{align}
Our first step is to show that the sizes of $\widehat{\mathcal{H}}_{2}$ and $\widehat{\mathcal{H}}_{3}$ are small.

\begin{claim}\label{CLAIM-H2+H3}
$|\widehat{\mathcal{H}}_{2}| + |\widehat{\mathcal{H}}_{3}| < 3 \epsilon k \binom{n-1}{k-1}$.
\end{claim}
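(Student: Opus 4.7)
The plan is to handle $\widehat{\mathcal{H}}_3$ and $\widehat{\mathcal{H}}_2$ by different tools: $\widehat{\mathcal{H}}_3$ will be bounded directly by combining Theorem \ref{THM-FU83} with the rank-shadow inequality of Lemma \ref{LEMMA-rank-up-bound}, while $\widehat{\mathcal{H}}_2$ will be controlled by feeding the weight lower bounds of Lemma \ref{LEMMA-rank-k-1} into the identity \eqref{EQU-weight-sum} together with the assumption $|\mathcal{H}|\ge (1-\epsilon)\binom{n-1}{k-1}$.

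For $|\widehat{\mathcal{H}}_3|$, if the extraction terminated with $\widehat{\mathcal{H}}_3=\emptyset$ there is nothing to prove. Otherwise the termination condition says the maximum $s$-homogeneous subgraph $\mathcal{H}_{m+1}$ of $\widehat{\mathcal{H}}_3$ has intersection pattern of rank at most $k-2$. Theorem \ref{THM-FU83} then gives $|\mathcal{H}_{m+1}|\ge c(k,s)|\widehat{\mathcal{H}}_3|$, while Lemma \ref{LEMMA-rank-up-bound} (applied with $r\le k-2$) gives $|\mathcal{H}_{m+1}|\le\binom{n}{k-2}$. Hence $|\widehat{\mathcal{H}}_3|\le c(k,s)^{-1}\binom{n}{k-2}$, and since $\binom{n}{k-2}/\binom{n-1}{k-1}\to 0$ as $n\to\infty$, taking $n_0(k,d,\delta)$ sufficiently large yields $|\widehat{\mathcal{H}}_3|\le \epsilon\binom{n-1}{k-1}/(k-1)$.

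For $|\widehat{\mathcal{H}}_2|$, Lemma \ref{LEMMA-rank-k-1}(1) gives $\omega_{\mathcal{H}}(E)\ge 1$ for every $E\in\widehat{\mathcal{H}}_1$ and Lemma \ref{LEMMA-rank-k-1}(2) gives $\omega_{\mathcal{H}}(E)\ge k/(k-1)$ for every $E\in\widehat{\mathcal{H}}_2$. Combining these with identity \eqref{EQU-weight-sum} and the trivial bound $|\partial\mathcal{H}|\le\binom{n}{k-1}=\binom{n-1}{k-1}+\binom{n-1}{k-2}$, substituting $|\widehat{\mathcal{H}}_1|=|\mathcal{H}|-|\widehat{\mathcal{H}}_2|-|\widehat{\mathcal{H}}_3|$, and using $|\mathcal{H}|\ge(1-\epsilon)\binom{n-1}{k-1}$, I would rearrange to obtain
\[
\tfrac{1}{k-1}|\widehat{\mathcal{H}}_2|\;\le\;\epsilon\binom{n-1}{k-1}+\binom{n-1}{k-2}+|\widehat{\mathcal{H}}_3|.
\]
Multiplying by $k-1$ and using $(k-1)\binom{n-1}{k-2}\le\epsilon\binom{n-1}{k-1}$ and $(k-1)|\widehat{\mathcal{H}}_3|\le\epsilon\binom{n-1}{k-1}$ (both of which hold for $n$ sufficiently large by the previous paragraph and the standard binomial ratio estimate) gives $|\widehat{\mathcal{H}}_2|\le (k+1)\epsilon\binom{n-1}{k-1}$. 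Adding $|\widehat{\mathcal{H}}_3|\le\epsilon\binom{n-1}{k-1}$ then yields $|\widehat{\mathcal{H}}_2|+|\widehat{\mathcal{H}}_3|\le (k+2)\epsilon\binom{n-1}{k-1}<3\epsilon k\binom{n-1}{k-1}$ since $k\ge 2$, which is the desired bound.

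The main subtlety is that $\widehat{\mathcal{H}}_3$ itself is not $s$-homogeneous, so Lemma \ref{LEMMA-rank-k-1} provides no weight bound for its edges; this is why it must be handled separately via F\"uredi's structural theorem together with the rank-shadow inequality. Once $|\widehat{\mathcal{H}}_3|=o(\binom{n-1}{k-1})$ has been secured, the remainder is a routine rearrangement of the weight identity exploiting that the weight strictly exceeds $1$ on $\widehat{\mathcal{H}}_2$.
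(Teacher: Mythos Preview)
Your proof is correct and follows essentially the same approach as the paper: bound $|\widehat{\mathcal{H}}_3|$ via Theorem~\ref{THM-FU83} and Lemma~\ref{LEMMA-rank-up-bound}, then bound $|\widehat{\mathcal{H}}_2|$ by combining the weight lower bounds of Lemma~\ref{LEMMA-rank-k-1} with the identity~\eqref{EQU-weight-sum} and the size hypothesis on $|\mathcal{H}|$. The only difference is cosmetic bookkeeping in the final rearrangement (the paper writes $|\widehat{\mathcal{H}}_2|=(k-1)\bigl(|\widehat{\mathcal{H}}_1|+\tfrac{k}{k-1}|\widehat{\mathcal{H}}_2|+|\widehat{\mathcal{H}}_3|-|\mathcal{H}|\bigr)$ and bounds directly, whereas you substitute and rearrange step by step), and you correctly cite Lemma~\ref{LEMMA-rank-k-1} where the paper's text has a typo referencing Lemma~\ref{LEMMA-H-covered-Ai}.
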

\begin{proof}[Proof of Claim \ref{CLAIM-H2+H3}]
First we show that $\widehat{\mathcal{H}}_{3} = O(n^{k-2})$.
We may assume that $\widehat{\mathcal{H}}_{3} \neq \emptyset$.
Recall that $\mathcal{H}_{m+1}$ is a maximum $s$-homogeneous subgraph of $\widehat{\mathcal{H}}_{3}$
with intersection pattern $\mathcal{J}_{m+1}$.
By Theorem \ref{THM-FU83}, there exists a constant $c(k,s)>0$ such that $|\mathcal{H}_{m+1}| \ge c(k,s)|\widehat{\mathcal{H}}_{3}|$.
By definition, $r(\mathcal{J}_{m+1}) \le k-2$, so by Lemma \ref{LEMMA-rank-up-bound},
$|\mathcal{H}_3| \le |\partial_2\mathcal{H}_3| \le \binom{n}{k-2}$.
Therefore, $|\widehat{\mathcal{H}}_{3}| \le \frac{1}{c(k,s)}\binom{n}{k-2}$.

Next we show that $\widehat{\mathcal{H}}_{2} = O(n^{k-2})$.
By Lemma \ref{LEMMA-H-covered-Ai} and Equation $(\ref{EQU-weight-sum})$,
\begin{align}
|\partial\mathcal{H}| =
\sum_{E\in \mathcal{H}}\omega_{\mathcal{H}}(E)
 = \sum_{E\in \widehat{\mathcal{H}}_{1}}\omega_{\mathcal{H}}(E)
+ \sum_{E\in \widehat{\mathcal{H}}_{2}}\omega_{\mathcal{H}}(E)
 \ge |\widehat{\mathcal{H}}_{1}| + \frac{k}{k-1}|\widehat{\mathcal{H}}_{2}|. \notag
\end{align}
Therefore, $|\widehat{\mathcal{H}}_{1}| + \frac{k}{k-1}|\widehat{\mathcal{H}}_{2}| \le \binom{n}{k-1}$,
which implies that
\begin{align}
|\widehat{\mathcal{H}}_{2}|
& = (k-1)\left(|\widehat{\mathcal{H}}_{1}| + \frac{k}{k-1}|\widehat{\mathcal{H}}_{2}|
    + |\widehat{\mathcal{H}}_{3}| - |\mathcal{H}|\right) \notag\\
& \le (k-1)\left(\binom{n}{k-1} + |\widehat{\mathcal{H}}_{3}| - (1-\epsilon)\binom{n-1}{k-1} \right)
< 2 \epsilon k \binom{n-1}{k-1}. \notag
\end{align}
This completes the proof of Claim \ref{CLAIM-H2+H3}.
\end{proof}

Note that the proof of Claim \ref{CLAIM-H2+H3} also shows that
\begin{align}\label{INEQU-upper-H}
|\mathcal{H}|
\le |\widehat{\mathcal{H}}_{1}| + \frac{k}{k-1}|\widehat{\mathcal{H}}_{2}| + |\widehat{\mathcal{H}}_{3}|
\le \binom{n}{k-1} + O(n^{k-2}).
\end{align}

Claim \ref{CLAIM-H2+H3} implies that
\begin{align}\label{INEQU-low-bound-hat-H1}
|\widehat{\mathcal{H}}_{1}| =  |\mathcal{H}| - \left(|\widehat{\mathcal{H}}_{2}|+|\widehat{\mathcal{H}}_{3}| \right)
> (1-4\epsilon k)\binom{n-1}{k-1}.
\end{align}

By definition,
for every $E\in \widehat{\mathcal{H}}_{1}$ there exists a unique $s$-homogeneous hypergraph $\mathcal{H}_{i}$ for some $i$
such that $E\in \mathcal{H}_i$,
moreover, $r(\mathcal{J}_i) = k-1$ and $\mathcal{J}_i$ contains exactly $k-1$ $(k-1)$-sets.
Therefore, $\mathcal{I}(E,\mathcal{H}_i)$ contains a unique vertex $c \in E$ such that
every $(k-1)$-subset of $E$ that contains $c$ is contained in $\mathcal{I}(E,\mathcal{H}_i)$.
Let $c(E)$ denote this unique vertex $c$ for every $E\in \widehat{\mathcal{H}}_{1}$.
Define $\mathcal{G}_{i} = \left\{E\in \widehat{\mathcal{H}}_{1}: c(E) = i \right\}$ for $i \in [n]$,
and notice that $\bigcup_{i\in[n]}\mathcal{G}_i = \widehat{\mathcal{H}}_{1}$ is a partition.
Let $\mathcal{G}_{i}(i) = \left\{E\setminus\{i\}: E\in \mathcal{G}_i \right\}$ for $i\in[n]$.
From the proof of Lemma \ref{LEMMA-rank-k-1} $(1)$, for every $i\in [n]$ and every $E\in \mathcal{G}_i$
the set $E\setminus\{i\}$ is not contained in any set in $\mathcal{H}\setminus\{E\}$.
Therefore, $\mathcal{G}_i(i) \cap \mathcal{G}_j(j) = \emptyset$ for all $\{i,j\} \subset [n]$.

\begin{claim}\label{CLAIM-shadow-disjoint}
$\partial\mathcal{G}_i(i) \cap \partial\mathcal{G}_{j}(j) = \emptyset$ for all $\{i,j\} \subset [n]$.
\end{claim}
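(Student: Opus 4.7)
The plan is a proof by contradiction that reduces almost immediately to a single application of Lemma~\ref{LEMMA-large-intersection}. Suppose, for contradiction, that some $(k-2)$-set $T$ lies in $\partial\mathcal{G}_i(i) \cap \partial\mathcal{G}_j(j)$ for distinct indices $i,j \in [n]$. Unpacking the definitions, there exist $E \in \mathcal{G}_i$ and $F \in \mathcal{G}_j$ with $T \subset E \setminus \{i\}$ and $T \subset F \setminus \{j\}$; in particular $\{i,j\} \cap T = \emptyset$, $i \in E$, $j \in F$, and $|E \cap F| \ge |T| = k-2$.

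Next I would locate the homogeneous piece carrying $E$. Since the $s$-homogeneous subgraphs $\mathcal{H}_1,\ldots,\mathcal{H}_m$ partition $\widehat{\mathcal{H}}_{1}$, there is a unique index $\ell$ with $E \in \mathcal{H}_\ell$; by the construction of $\widehat{\mathcal{H}}_{1}$ its intersection pattern $\mathcal{J}_\ell$ has rank $k-1$ and contains exactly $k-1$ $(k-1)$-sets, and by the definition of $c(\cdot)$ the vertex $i = c(E)$ is the distinguished vertex of $E$ that lies in every $(k-1)$-set of $\mathcal{I}(E,\mathcal{H}_\ell)$. This is precisely the setup of Lemma~\ref{LEMMA-large-intersection} with $\mathcal{H}^{\ast} = \mathcal{H}_\ell$ and $v = i$. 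The normalization $a_1 \ge \cdots \ge a_p$ combined with $\sum_{t=1}^{p} a_t = k > p$ forces $a_1 \ge 2$, so $k - a_1 \le k-2 \le |E \cap F|$, and the lemma yields $i \in F$. The symmetric application, applied to the (unique) homogeneous piece $\mathcal{H}_{\ell'}$ containing $F$ with distinguished vertex $j = c(F)$, gives $j \in E$.

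To finish, observe that $F \supseteq T \cup \{j\}$, $|F| = k$, $|T| = k-2$, and we have just shown $i \in F$ with $i \notin T \cup \{j\}$, so the one vertex of $F$ outside $T \cup \{j\}$ must be $i$; that is, $F = T \cup \{i,j\}$. By the symmetric argument $E = T \cup \{i,j\}$, so $E = F$. But then $c(E) = c(F)$ forces $i = j$, contradicting $i \neq j$ and proving the claim. I do not anticipate any serious obstacle: the only point requiring a brief verification is $a_1 \ge 2$, which is immediate from $k > p$, and after that Lemma~\ref{LEMMA-large-intersection} carries the entire argument.
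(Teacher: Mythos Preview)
Your proof is correct and follows essentially the same route as the paper: both argue by contradiction, use $|E\cap F|\ge k-2\ge k-a_1$ (from $a_1\ge 2$) to invoke Lemma~\ref{LEMMA-large-intersection} twice and force $E=F$, contradicting the disjointness of the $\mathcal{G}_i$'s. If anything, your version is slightly more careful in explicitly locating the homogeneous piece $\mathcal{H}_\ell$ containing $E$ before applying the lemma, whereas the paper phrases this more loosely.
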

\begin{proof}[Proof of Claim \ref{CLAIM-shadow-disjoint}]
Suppose not.
Without loss of generality we may assume that there exists
$A \in \partial\mathcal{G}_1(1) \cap \partial\mathcal{G}_{2}(2)$.
Then there exists $E_1 \in \mathcal{G}_1$ and $E_2 \in \mathcal{G}_2$ such that
$E_1 = \{1,u\}\cup A$ and $E_2 = \{2,v\} \cup A$ for some $u,v \in [n]$.
Since $\mathcal{G}_1$ is $s$-homogeneous and $|E_2\cap E_1| \ge k-2 \ge k-a_1$,
by Lemma \ref{LEMMA-large-intersection}, $1 \in E_2$.
Similarly, we obtain $2 \in E_1$.
Therefore, $E_1 = E_2 = \{1,2\} \cup A$, which implies that $\{1,2\} \cup A \in \mathcal{G}_1\cap \mathcal{G}_2$,
a contradiction.
\end{proof}

Let $x_i \in \mathbb{R}$ such that $|\mathcal{G}_{i}| = |\mathcal{G}_{i}(i)| = \binom{x_i}{k-1}$ for $i \in [n]$.
Without loss of generality we may assume that $x_1 \ge \cdots \ge x_n \ge 0$.
By the Kruskal-Katona theorem (e.g. see \cite{LO93}),
\begin{align}
|\mathcal{G}_i(i)|
\le \frac{\binom{x_i}{k-1}}{\binom{x_i}{k-2}} |\partial\mathcal{G}_i(i)|
= \frac{x_i - k +2}{k-1} |\partial\mathcal{G}_i(i)|, \notag
\end{align}
for $i \in [n]$.
Therefore by $(\ref{INEQU-low-bound-hat-H1})$ and Claim \ref{CLAIM-shadow-disjoint},
\begin{align}
(1-4\epsilon k)\binom{n-1}{k-1}
 <  |\widehat{\mathcal{H}}_1|
& = \sum_{i\in [n]}|\mathcal{G}_i|
= \sum_{i\in [n]}|\mathcal{G}_i(i)|
\le  \sum_{i\in \mathcal{H}}\frac{x_i - k +2}{k-1} |\partial\mathcal{G}_i(i)| \notag\\
& \le \frac{x_1 - k +2}{k-1} \sum_{i\in \mathcal{H}} |\partial\mathcal{G}_i(i)|
\le \frac{x_1 - k +2}{k-1} \binom{n}{k-2}, \notag
\end{align}
which implies that
\begin{align}
x_1
\ge (k-1)\frac{(1-4\epsilon k)\binom{n-1}{k-1}}{\binom{n}{k-2}}+k-2
> (1-5\epsilon k)n.\notag
\end{align}
Therefore,
\begin{align}
|\mathcal{G}_1| = \binom{x_1}{k-1} > \binom{(1-5\epsilon k)n}{k-1} > (1-5\epsilon k^2) \binom{n-1}{k-1},\notag
\end{align}
which together with $(\ref{INEQU-upper-H})$
implies that all but at most $5\epsilon k^2 n^{k-1}$ edges in $\mathcal{H}$ contain the vertex $1$.
\end{proof}

Now we prove Theorem \ref{THM-xd-cluster-free}.

\begin{proof}[Proof of Theorem \ref{THM-xd-cluster-free}]
Let $d \ge p \ge 2$, $k > p$, $s=kd+1$,
and $\vec{a} = (a_1,\ldots,a_p)$ be a sequence of positive integers such that $a_1 \ge \cdots \ge a_p$ and $\sum_{i\in[p]}a_i = k$.
Let $n \ge n_0(k,d)$ be sufficiently large.
Let $\mathcal{H} \subset \binom{[n]}{k}$ be a hypergraph that contains no $(\vec{a},d)$-$\Delta$-systems
and $|\mathcal{H}| = \binom{n-1}{k-1}$.
It suffices to show that all edges in $\mathcal{H}$ contain a fixed vertex.

From the proof of Theorem \ref{THM-stability} we know that
$\mathcal{H}$ contains a subgraph $\mathcal{G}_1$ such that
all edges in $\mathcal{G}_1$ contains a fixed vertex (we may assume that this vertex is $1$),
moreover, $\mathcal{G}_1$ consists of
pairwise edge-disjoint $s$-homogeneous hypergraphs whose intersection patterns have rank $k-1$
and contain all $(k-1)$-subsets of $[k]$ that contain $1$.

%

Define
\begin{align}
\mathcal{B}_0 &= \{E\in \mathcal{H}: 1\not\in E\}, \notag\\
\mathcal{B}_1 &= \{E\in \mathcal{H}: 1 \in E \text{ and } |E\cap B|\ge k-a_1 \text{ for some } B\in \mathcal{B}_0\}, \notag\\
\mathcal{G} &= \{E\in \mathcal{H}\setminus \mathcal{B}_1: 1\in E, \text{ $\forall S\subset E$ with $1\in S$ is the center of a $\Delta$-system in
                $\mathcal{H}$ of size $s$}\}, \notag\\
\mathcal{B}_2 &= \{E\in \mathcal{H}: 1\in E\}\setminus (\mathcal{B}_1 \cup \mathcal{G}). \notag
\end{align}
Note that $\mathcal{G}_1\subset \mathcal{G}$.
Let
\begin{align}
\mathcal{B}_1(1) = \{E\setminus 1: E\in \mathcal{B}_1\},
\quad
\mathcal{G}(1) = \{E\setminus 1: E\in \mathcal{G}\},
\quad {\rm and} \quad
\mathcal{B}_2(1) = \{E\setminus 1: E\in \mathcal{B}_2\}. \notag
\end{align}
Let $\mathcal{B}^{\ast}_{1}(1),\mathcal{B}^{\ast}_{2}(1)$
be maximum $s$-homogeneous subgraphs of $\mathcal{B}_1(1),\mathcal{B}_2(1)$, respectively.
Then by Theorem \ref{THM-FU83}, $|\mathcal{B}^{\ast}_{i}(1)| \ge c(k,s)|\mathcal{B}_i(1)|$ for some constant $c(k,s)>0$ and $i=1,2$.
Recall that for every $E \in \partial\mathcal{G}(1)$, $\deg_{\mathcal{G}(1)}(E)$ is the number of edges in $\mathcal{G}(1)$ that contain $E$.
Since $\sum_{E\in \partial\mathcal{G}(1)}\deg_{\mathcal{G}(1)}(E) = (k-1)|\mathcal{G}(1)|$ and $\deg_{\mathcal{G}(1)}(E) \le n-k+1$,
we have
\begin{align}\label{INEQU-G-shadow-G}
|\partial\mathcal{G}(1)| \ge \frac{k-1}{n-k+1}|\mathcal{G}(1)|.
\end{align}

\begin{claim}\label{CLAIM-G-B0}
$|\mathcal{G}| + 4 |\mathcal{B}_0| \le \binom{n-1}{k-1}$.
\end{claim}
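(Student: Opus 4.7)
The plan is to reformulate the claim via the identity
\[
\binom{n-1}{k-1}=|\mathcal{H}|=|\mathcal{G}|+|\mathcal{B}_1|+|\mathcal{B}_2|+|\mathcal{B}_0|,
\]
so the claim becomes equivalent to $|\mathcal{B}_1|+|\mathcal{B}_2|\ge 3|\mathcal{B}_0|$; I would aim to prove the stronger bound $|\mathcal{B}_1|\ge 3|\mathcal{B}_0|$ directly.

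Consider the link $L_1:=\{E\setminus\{1\}:1\in E\in\mathcal{H}\}\subseteq\binom{[n]\setminus\{1\}}{k-1}$. Since $|L_1|=|\mathcal{H}|-|\mathcal{B}_0|$, the complement $\bar L_1=\binom{[n]\setminus\{1\}}{k-1}\setminus L_1$ satisfies $|\bar L_1|=|\mathcal{B}_0|$. Let $\mathcal{P}\subseteq\binom{[n]\setminus\{1\}}{k-1}$ be the family of $(k-1)$-subsets $S$ with $|S\cap B|\ge k-a_1$ for some $B\in\mathcal{B}_0$. For each $S\in\mathcal{P}\cap L_1$, the edge $S\cup\{1\}$ lies in $\mathcal{B}_1$ (since $|(S\cup\{1\})\cap B|=|S\cap B|\ge k-a_1$), so
\[
|\mathcal{B}_1|\ \ge\ |\mathcal{P}\cap L_1|\ \ge\ |\mathcal{P}|-|\bar L_1|\ =\ |\mathcal{P}|-|\mathcal{B}_0|.
\]
It therefore suffices to prove $|\mathcal{P}|\ge 4|\mathcal{B}_0|$.

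For each $B\in\mathcal{B}_0$, the number of $S\subseteq[n]\setminus\{1\}$ with $|S|=k-1$ and $|S\cap B|=k-a_1$ equals $\binom{k}{a_1}\binom{n-k-1}{a_1-1}=\Theta(n^{a_1-1})$; note $k>p\ge 2$ forces $a_1\ge 2$, so this quantity grows at least linearly in $n$. A double count over pairs $(B,S)$ gives
\[
\binom{k}{a_1}\binom{n-k-1}{a_1-1}\cdot|\mathcal{B}_0|\ =\ \sum_{S\in\mathcal{P}}d^*(S)\ \le\ |\mathcal{P}|\cdot\max_{S\in\mathcal{P}}d^*(S),
\]
where $d^*(S)=|\{B\in\mathcal{B}_0:|S\cap B|=k-a_1\}|$. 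Hence the main task reduces to bounding $\max_S d^*(S)$ by a constant depending only on $k$ and $d$.

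The hard part is this final bound: a large value of $d^*(S)$ produces a large $\Delta$-system in $\mathcal{B}_0$ centered at some $(k-a_1)$-subset $V\subseteq S$, and I would combine this with the abundant $\Delta$-systems supplied by $\mathcal{G}_1$ through vertex $1$ --- whose centers of size $\le k-2$ inherit enormous $\mathcal{H}$-degree from the near-star structure --- to extract an $(\vec{a},d)$-$\Delta$-system in $\mathcal{H}$ via Lemma~\ref{LEMMA-semi-implies-nonsemi}, contradicting the hypothesis. Once $\max_S d^*(S)\le C(k,d)$, the displayed double count together with the bound $|\mathcal{B}_0|=o(n^{k-1})$ from Theorem~\ref{THM-stability} yields $|\mathcal{P}|\ge 4|\mathcal{B}_0|$ for $n$ sufficiently large, and we conclude
\[
|\mathcal{G}|+4|\mathcal{B}_0|\ =\ |\mathcal{G}|+|\mathcal{B}_0|+3|\mathcal{B}_0|\ \le\ |\mathcal{G}|+|\mathcal{B}_0|+|\mathcal{B}_1|+|\mathcal{B}_2|\ =\ \binom{n-1}{k-1}.
\]
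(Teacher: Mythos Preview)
Your reformulation $|\mathcal{G}|+4|\mathcal{B}_0|\le\binom{n-1}{k-1}\iff |\mathcal{B}_1|+|\mathcal{B}_2|\ge 3|\mathcal{B}_0|$ is correct, and the double-count setting up $|\mathcal{B}_1|\ge|\mathcal{P}|-|\mathcal{B}_0|$ is fine. The gap is entirely in the ``hard part'' you flag yourself: the uniform bound $\max_S d^*(S)\le C(k,d)$ is neither proved nor clearly true. First, a large $d^*(S)$ only gives many $B\in\mathcal{B}_0$ with $B\cap S$ equal to one of $\binom{k-1}{a_1-1}$ fixed $(k-a_1)$-sets; after pigeonholing to a common $V$, the sets $B\setminus V$ are $a_1$-sets that need not be pairwise disjoint, so you must invoke the sunflower lemma, and even then the resulting center may strictly contain $V$. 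Second, and more seriously, the combination with $\mathcal{G}_1$ does not line up: the $\Delta$-systems supplied by $\mathcal{G}_1$ have centers \emph{containing} $1$, whereas a host $E_0\in\mathcal{B}_0$ has $1\notin E_0$, so every $E_0\setminus A_i$ omits $1$ and cannot be one of those centers. You would instead need, for each $i\ge 2$, many edges $F\in\mathcal{H}$ with $F\cap E_0=E_0\setminus A_i$; such $F$ might pass through $1$, but nothing in your setup guarantees that the \emph{specific} $(k-a_i+1)$-set $(E_0\setminus A_i)\cup\{1\}$ has large degree in $\mathcal{H}$ --- the near-star structure from stability only controls the total number of missing edges, not which ones. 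So the sketch does not yield the contradiction you want without substantial further argument.

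By contrast, the paper avoids the detour through $\mathcal{B}_1$ altogether. The key observation is that the very definition of $\mathcal{G}$ (namely $\mathcal{G}\cap\mathcal{B}_1=\emptyset$) forces $|E\cap B|\le k-a_1-1\le k-3$ for every $E\in\mathcal{G}(1)$ and $B\in\mathcal{B}_0$, hence $\partial\mathcal{G}(1)\cap\partial_2\mathcal{B}_0=\emptyset$ inside $\binom{[n]\setminus\{1\}}{k-2}$. Combining the trivial bound $|\partial\mathcal{G}(1)|\ge\frac{k-1}{n-k+1}|\mathcal{G}|$ with Kruskal--Katona and Proposition~\ref{PROP-shadow-H-H-F-free} applied to $\mathcal{B}_0$ gives
\[
|\mathcal{G}|+c(k,s)\,\frac{n-k+1}{x-k+1}\,|\mathcal{B}_0|\ \le\ \binom{n-1}{k-1},
\]
where $\binom{x}{k-1}=|\partial\mathcal{B}_0|$. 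Finally, Theorem~\ref{THM-stability} makes $|\mathcal{B}_0|$ (hence $x$) small enough that the coefficient exceeds $4$. This route uses the interaction between $\mathcal{G}$ and $\mathcal{B}_0$ directly at the shadow level and never needs to control~$d^*(S)$.
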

\begin{proof}[Proof of Claim \ref{CLAIM-G-B0}]
Notice that by definition $|E\cap B| \le k-a_1-1 \le k-3$ for all $E\in \mathcal{G}(1)$ and $B \in \mathcal{B}_0$.
Therefore, $\partial\mathcal{G}(1) \cap \partial_{2}\mathcal{B}_0 = \emptyset$,
and hence $|\partial\mathcal{G}(1)| + |\partial_{2}\mathcal{B}_0| \le \binom{n-1}{k-2}$.
Let $x \in \mathbb{R}$ such that $|\partial\mathcal{B}_0| = \binom{x}{k-1}$,
then by the Kruskal-Katona theorem and Proposition \ref{PROP-shadow-H-H-F-free},
\begin{align}
|\partial_2\mathcal{B}_0|
\ge \frac{k-1}{x-k+1}|\partial\mathcal{B}_0|
\ge \frac{k-1}{x-k+1}c(k,s)|\mathcal{B}_0|. \notag
\end{align}
Therefore, together with $(\ref{INEQU-G-shadow-G})$ we obtain
\begin{align}
\frac{k-1}{n-k+1}|\mathcal{G}(1)| + \frac{k-1}{x-k+1}c(k,s)|\mathcal{B}_0| \le \binom{n-1}{k-2}, \notag
\end{align}
which implies $|\mathcal{G}| + c(k,s)\frac{n-k+1}{x-k+1}|\mathcal{B}_0| \le \binom{n-1}{k-1}$.
By Theorem \ref{THM-stability}, $\binom{x}{k-1} = |\partial\mathcal{B}_0| \le k |\mathcal{B}_0| \le \delta n^{k-1}$
for all sufficiently small $\delta>0$ (as long as $n$ is sufficiently large),
so $x < \delta' n$ for some sufficiently small $\delta' > 0$ (depending on $\delta$).
Choosing $\delta' \ll c(k,s)$ we obtain
$c(k,s)\frac{n-k+1}{\delta'n -k+1} > 4$, this completes the proof of Claim \ref{CLAIM-G-B0}.
\end{proof}

\begin{claim}\label{CLAIM-B1}
Every $E\in \mathcal{B}^{\ast}_{1}(1)$ has a $(k-2)$-subset that is not contain in any other set
in $\mathcal{B}^{\ast}_{1}(1) \cup \mathcal{G}'$.
\end{claim}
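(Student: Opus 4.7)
The plan is to prove two statements separately: (a) the intersection-pattern rank of $\mathcal{B}^{\ast}_{1}(1)$, viewed as a $(k-1)$-uniform $s$-homogeneous hypergraph, is at most $k-2$, so that Lemma~\ref{LEMMA-rank-up-bound} gives every edge $E\in \mathcal{B}^{\ast}_{1}(1)$ a $(k-2)$-subset $T\subset E$ that lies in no other edge of $\mathcal{B}^{\ast}_{1}(1)$; and (b) this specific $T$ is also contained in no edge of $\mathcal{G}'$. In both parts I would argue by contradiction and exhibit an $(\vec{a},d)$-$\Delta$-system in $\mathcal{H}$ via Lemma~\ref{LEMMA-semi-implies-nonsemi}, thereby violating the hypothesis on $\mathcal{H}$.

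For (a), suppose instead $r(\mathcal{J}_{\ast})=k-1$, so that $\mathcal{J}_{\ast}=2^{[k-1]}\setminus\{[k-1]\}$ and every proper subset of every $E\in\mathcal{B}^{\ast}_{1}(1)$ is the center of an $s$-sunflower inside $\mathcal{B}^{\ast}_{1}(1)$. Fix such an $E$, set $\tilde{E}=\{1\}\cup E\in\mathcal{B}_1$, and invoke the definition of $\mathcal{B}_1$ to obtain $B\in\mathcal{B}_0$ with $|E\cap B|\ge k-a_1$. Shrink to $M\subset E\cap B$ of size exactly $k-a_1$ and form the $\vec{a}$-partition of $\tilde{E}$ with $A_1:=\tilde{E}\setminus M$ (of size $a_1$, containing $1$) and $A_2,\dots,A_p$ any partition of $M$ of the appropriate sizes. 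For $i\ge 2$ the set $E\setminus A_i$ is a proper subset of $[k-1]$, hence lies in $\mathcal{J}_\ast$; by $s$-homogeneity it centers an $s$-sunflower in $\mathcal{B}^{\ast}_{1}(1)$, which lifts (by adjoining $1$) to an $s$-sunflower in $\mathcal{H}$ with center $\tilde{E}\setminus A_i$. For $i=1$, the pair $\{\tilde{E},B\}$ is already a $2$-sunflower with center $M=\tilde{E}\setminus A_1$. This is a semi-$(\vec{a},\vec{c})$-$\Delta$-system in $\mathcal{H}$ with $c_1\ge 1$ and $c_i\ge s-1=kd$ for $i\ge 2$, so Lemma~\ref{LEMMA-semi-implies-nonsemi} produces an $(\vec{a},d)$-$\Delta$-system, a contradiction.

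For (b), suppose $T\subset F$ for some $F\in\mathcal{G}'$, so that $\tilde{F}:=\{1\}\cup F\in\mathcal{G}$ and by the defining property of $\mathcal{G}$ every proper subset of $\tilde{F}$ containing $1$ is the center of an $s$-sunflower in $\mathcal{H}$. Since $\mathcal{B}_1$ and $\mathcal{G}$ are disjoint we have $\tilde{E}\ne\tilde{F}$, and since both contain the $(k-1)$-set $\{1\}\cup T$ this forces $\tilde{E}\cap\tilde{F}=\{1\}\cup T$. Writing $F=T\cup\{w\}$ and letting $v$ be the unique element of $E\setminus T$, necessarily $w\ne v$. I would now use $\tilde{F}$ as the host of a semi-$(\vec{a},\vec{c})$-$\Delta$-system: choose $A_1$ containing $1$ (and, when possible, $w$) so that $\tilde{F}\setminus A_1\subset B\cap\tilde{F}$ for a witness $B\in\mathcal{B}_0$ of $\tilde{E}\in\mathcal{B}_1$, which is feasible by case analysis on whether $w\in B$ and whether $v\in B$, using the flexibility to shrink $M\subset E\cap B$. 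For $i\ge 2$ the $\mathcal{G}$-property supplies an $s$-sunflower centered at $\tilde{F}\setminus A_i$, and for $i=1$ the edge $B$ serves as the single required petal. Lemma~\ref{LEMMA-semi-implies-nonsemi} again yields an $(\vec{a},d)$-$\Delta$-system, contradiction.

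The main obstacle is the careful engineering of $A_1$ in (b): one must produce $\tilde{F}\setminus A_1=B\cap\tilde{F}$ exactly across the subcases determined by the location of $v$ and $w$ relative to $B$, and the degenerate case $a_1=1$ (which forces $\vec{a}=(1,\dots,1)$) calls for a symmetric argument swapping the roles of $\tilde{E}$ and $\tilde{F}$ as host. Part (a), by contrast, is a direct adaptation of the rank argument already established in Lemma~\ref{LEMMA-rank-J-less-k}.
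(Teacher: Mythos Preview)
Your two-step plan does not go through, and the obstruction is structural rather than a missing case.

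The fatal point is in (b). By definition $\mathcal{G}\cap\mathcal{B}_1=\emptyset$; since $\tilde F\in\mathcal{G}$ and $1\in\tilde F$, this forces $|\tilde F\cap B'|\le k-a_1-1$ for \emph{every} $B'\in\mathcal{B}_0$. In particular, for your witness $B$ one has $|\tilde F\setminus B|\ge a_1+1>|A_1|$, so there is no choice of $A_1\subset\tilde F$ of size $a_1$ with $\tilde F\setminus A_1\subseteq \tilde F\cap B$; your case analysis on whether $v,w\in B$ only pins down which configuration survives (namely $v\in B$, $w\notin B$, $|B\cap\tilde E|=k-a_1$), it cannot manufacture the missing inclusion. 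Swapping to $\tilde E$ as host does not save the argument either: after part~(a) the only information you retain is that the single set $T$ is private to $E$ inside $\mathcal{B}^{\ast}_1(1)$, which is far from enough to certify that each $\tilde E\setminus A_i$ for $i\ge 3$ (those with $A_i\subset T\cap B$, say) is the center of a large sunflower. So the decomposition into (a) and (b) discards exactly the information needed.

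The paper proceeds differently: it does \emph{not} fix $T$ in advance but assumes every $(k-2)$-subset of $E$ is covered by $\mathcal{B}^{\ast}_1(1)\cup\mathcal{G}(1)$, records the split $\ell$ between the two sources, and uses both simultaneously. The $\mathcal{G}$-neighbours $E_1,\dots,E_\ell$ first force $|B\cap\widehat E|=k-a_1$ exactly and $v_1,\dots,v_\ell\in B$; then with host $\widehat E$ and $A_1=\widehat E\setminus B$, each $A_i$ ($i\ge 2$) either meets $\{v_1,\dots,v_\ell\}$, so $\widehat E\setminus A_i$ sits inside some $E_j\in\mathcal G$ and inherits an $s$-sunflower from the defining property of $\mathcal G$, or lies entirely in $\{v_{\ell+1},\dots,v_{k-1}\}$, so $\widehat E\setminus A_i\in\mathcal I(\widehat E,\mathcal B^{\ast}_1)$ by intersection-closure of $\mathcal J_{\mathcal B^{\ast}_1}$. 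This interleaving of the two sunflower sources is the crux, and it is precisely what separating (a) from (b) loses.

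A smaller issue: in (a) your ``shrink to $M$'' step is also wrong as written --- if $|E\cap B|>k-a_1$ then $\{\tilde E,B\}$ has center $\tilde E\cap B\supsetneq M$, not $M$, so it is not a semi-$\Delta$-system piece for $A_1$. That gap is repairable by the device in Lemma~\ref{LEMMA-large-intersection} (pass to a neighbour of $\tilde E$ realising the exact intersection size), but the gap in (b) is not.
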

\begin{proof}[Proof of Claim \ref{CLAIM-B1}]
Suppose not.
Let $E = \{v_1,\ldots,v_{k-1}\} \in \mathcal{B}^{\ast}_{1}(1)$ such that
$E\setminus\{v_i\}$ is contained in some set in $\mathcal{B}^{\ast}_{1}(1) \cup \mathcal{G}(1)$ for $1\le i \le k-1$.
Without loss of generality we may assume that
$E\setminus \{v_i\} \in \mathcal{I}(E,\mathcal{G}(1))$ for $1 \le i \le \ell$,
and $E\setminus \{v_i\} \in \mathcal{I}(E,\mathcal{B}^{\ast}_1(1))$ for $\ell+1 \le i \le k-1$.

Let $\mathcal{J}_{\mathcal{B}^{\ast}_{1}(1)}$ be the intersection pattern of $\mathcal{B}^{\ast}_{1}(1)$.
Let $\mathcal{B}^{\ast}_{1} = \{E\cup\{1\}: E\in \mathcal{B}^{\ast}_{1}(1)\}$,
and note that $\mathcal{B}^{\ast}_{1}$ is also $s$-homogeneous with intersection pattern
$\mathcal{J}_{\mathcal{B}^{\ast}_{1}} := \{A\cup \{1\}: A\in \mathcal{J}_{\mathcal{B}^{\ast}_{1}(1)}\}$.
Let $\widehat{E} = E\cup \{1\} \in \mathcal{B}^{\ast}_{1}$.

If $\ell = 0$, then $\mathcal{J}_{\mathcal{B}^{\ast}_{1}(1)} = 2^{[k-1]}\setminus \{[k-1]\}$,
and hence $r(\Pi(\mathcal{I}(\widehat{E},\mathcal{B}^{\ast}_{1}))) = k-1$ and $\Pi(\mathcal{I}(\widehat{E},\mathcal{B}^{\ast}_{1}))$
contains all $(k-1)$-subsets of $\widehat{E}$ that contain $1$.
By definition there exists $B \in \mathcal{B}_0$ such that $|B\cap \widehat{E}|\ge k-a_1$.
However, by Lemma \ref{LEMMA-large-intersection}, $1 \in B$, a contradiction.
Therefore, $\ell \ge 1$.

Let $E_i \in \mathcal{G}$ such that $E_i \cap \widehat{E} = \widehat{E}\setminus\{v_i\}$ for $1\le i \le \ell$.
Let $B \in \mathcal{B}_0$ such that $|B\cap \widehat{E}|\ge k-a_1$ and suppose that $|B\cap \widehat{E}| = k-t$ for some $1\le t \le a_1$.
Then for $1\le i \le \ell$ we have $|B\cap E_i| \ge k-t-1$.
However, by the definition of $\mathcal{G}$, $|B \cap E_i| \le k-a_1-1$ for $1\le i \le \ell$.
Therefore, $|B\cap \widehat{E}| = k-a_1$ and $v_i \in B$ for all $1 \le i \le \ell$.
Let $\bigcup_{i\in [p]}A_i = \widehat{E}$ be an $\vec{a}$-partition such that $A_1 = \widehat{E}\setminus B$.
Note that for $2\le i \le p$,
either $1 \in \widehat{E}\setminus A_i \subset E_{j_i}$ for some $1 \le j_i \le \ell$, which by the definition of $\mathcal{G}$,
is the center of some $\Delta$-system of size $s$ in $\mathcal{H}$,
or $\{1,v_1, \ldots, v_{\ell}\} \subset \widehat{E}\setminus A_i$,
which implies that $\widehat{E}\setminus A_i \in \mathcal{I}(\widehat{E},\mathcal{B}^{\ast}_1)$
and hence is the center of some $\Delta$-system of size $s$ in $\mathcal{B}^{\ast}_1$.
Note that $E\setminus A_1$ is the center of a $\Delta$-system of size $2$, i.e. $\{\widehat{E},B\}$.
Therefore, by Lemma \ref{LEMMA-semi-implies-nonsemi}, $\mathcal{H}$ contains an $(\vec{a},d)$-$\Delta$-system, a contradiction.
\end{proof}

By Claim \ref{CLAIM-B1}, we obtain $|\partial\mathcal{G}(1)| + |\mathcal{B}^{\ast}_{1}(1)| \le \binom{n-1}{k-2}$,
which implies $|\mathcal{G}| + c(k,s)\frac{n-k+1}{k-1}|\mathcal{B}_1|  \le \binom{n-1}{k-1}$.
Note that $c(k,s)\frac{n-k+1}{k-1} \gg 1$, so
\begin{align}
|\mathcal{G}| + 4|\mathcal{B}_1|  \le \binom{n-1}{k-1}. \notag
\end{align}

\begin{claim}\label{CLAIM-B2}
Every $E\in \mathcal{B}^{\ast}_{2}(1)$ has a $(k-2)$-subset that is not contain in any other set
in $\mathcal{B}^{\ast}_{2}(1) \cup \mathcal{G}'$.
\end{claim}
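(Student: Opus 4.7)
The plan is to mirror the structure of the proof of Claim \ref{CLAIM-B1}, but since $\widehat{E} := E \cup \{1\}$ now lies in $\mathcal{B}_2$ rather than $\mathcal{B}_1$, we no longer have access to an edge $B \in \mathcal{B}_0$ intersecting $\widehat{E}$ in $k - a_1$ vertices. Instead, we will exploit the other defining feature of $\mathcal{B}_2$, namely $\widehat{E} \notin \mathcal{G}$, which (together with $\widehat{E} \notin \mathcal{B}_1$) means that some proper subset $S \subsetneq \widehat{E}$ containing $1$ fails to be the center of a $\Delta$-system of size $s$ in $\mathcal{H}$. The goal is to derive a contradiction by showing that \emph{every} such $S$ is in fact a center, which would force $\widehat{E} \in \mathcal{G}$.

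Setup: suppose for contradiction that $E = \{v_1, \ldots, v_{k-1}\} \in \mathcal{B}^{\ast}_{2}(1)$ has every $(k-2)$-subset $E \setminus \{v_i\}$ covered by some other edge of $\mathcal{B}^{\ast}_{2}(1) \cup \mathcal{G}(1)$. Lift $\mathcal{B}^{\ast}_{2}(1)$ to the $s$-homogeneous hypergraph $\mathcal{B}^{\ast}_{2} := \{F \cup \{1\} : F \in \mathcal{B}^{\ast}_{2}(1)\} \subset \mathcal{B}_2$, and relabel so that $\widehat{E} \setminus \{v_i\} \subset E_i$ for some $E_i \in \mathcal{G}$ whenever $1 \le i \le \ell$, and $\widehat{E} \setminus \{v_i\} \in \mathcal{I}(\widehat{E}, \mathcal{B}^{\ast}_{2})$ whenever $\ell+1 \le i \le k-1$.

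The heart of the argument is a two-case dichotomy applied to an arbitrary $S \subsetneq \widehat{E}$ with $1 \in S$. If $\{v_1, \ldots, v_\ell\} \subset S$, then $S = \widehat{E} \setminus T$ for some nonempty $T \subset \{v_{\ell+1}, \ldots, v_{k-1}\}$; by the closure-under-intersection property of the homogeneous intersection pattern, $S = \bigcap_{v_j \in T}(\widehat{E} \setminus \{v_j\})$ lies in $\mathcal{I}(\widehat{E}, \mathcal{B}^{\ast}_{2})$, so property (4) of $s$-homogeneity yields a $\Delta$-system of size $s$ in $\mathcal{B}^{\ast}_{2} \subset \mathcal{H}$ centered at $S$. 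Otherwise some $v_i$ with $1 \le i \le \ell$ is missing from $S$, so $S \subset \widehat{E} \setminus \{v_i\} \subset E_i \in \mathcal{G}$, and the definition of $\mathcal{G}$ directly supplies the required $\Delta$-system in $\mathcal{H}$. Combined with $\widehat{E} \notin \mathcal{B}_1$, this would place $\widehat{E}$ in $\mathcal{G}$, contradicting $\widehat{E} \in \mathcal{B}_2$.

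The main thing to verify carefully will be the corner cases $\ell = 0$ (when every $S$ lands in the first case, since $\{v_1,\ldots,v_\ell\}=\emptyset$ is trivially contained in $S$) and $\ell = k-1$ (when the first case is vacuous but the second still covers every $S$, as $|S| \le k-1$ forces $S$ to miss some $v_i$), together with a clean application of the closure-under-intersection axiom for the pattern $\mathcal{J}_{\mathcal{B}^{\ast}_{2}}$. Notably, in contrast to Claim \ref{CLAIM-B1}, no $(\vec{a}, d)$-$\Delta$-system construction via Lemma \ref{LEMMA-semi-implies-nonsemi} is needed here; the contradiction is extracted purely from the defining condition of $\mathcal{B}_2$.
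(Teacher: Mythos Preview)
Your proposal is correct and follows essentially the same route as the paper's proof: both argue by contradiction, lift to $\widehat{E}\in\mathcal{B}^{\ast}_{2}$, split the $(k-2)$-subsets according to whether they are covered from $\mathcal{G}(1)$ or from $\mathcal{B}^{\ast}_{2}(1)$, and then show every proper $S\subsetneq\widehat{E}$ with $1\in S$ is the center of a $\Delta$-system of size $s$ in $\mathcal{H}$, forcing $\widehat{E}\in\mathcal{G}$. The only cosmetic difference is that the paper treats $\ell=0$ as a separate case before proceeding, whereas you absorb it into the unified dichotomy by noting $\{v_1,\ldots,v_\ell\}=\emptyset\subset S$; both handle the closure-under-intersection step and the appeal to the definition of $\mathcal{G}$ identically.
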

\begin{proof}[Proof of Claim \ref{CLAIM-B2}]
Suppose not.
Let $E = \{v_1,\ldots,v_{k-1}\} \in \mathcal{B}^{\ast}_{2}(1)$ such that
$E\setminus\{v_i\}$ is contained in some set in $\mathcal{B}^{\ast}_{2}(1) \cup \mathcal{G}(1)$ for $1\le i \le k-1$.
Without loss of generality we may assume that
$E\setminus \{v_i\} \in \mathcal{I}(E,\mathcal{G}(1))$ for $1 \le i \le \ell$,
and $E\setminus \{v_i\} \in \mathcal{I}(E,\mathcal{B}^{\ast}_2(1))$ for $\ell+1 \le i \le k-1$.

Let $\mathcal{J}_{\mathcal{B}^{\ast}_{2}(1)}$ be the intersection pattern of $\mathcal{B}^{\ast}_{2}(1)$.
Let $\mathcal{B}^{\ast}_{2} = \{E\cup\{1\}: E\in \mathcal{B}^{\ast}_{2}(1)\}$,
and note that $\mathcal{B}^{\ast}_{2}$ is also $s$-homogeneous with intersection pattern
$\mathcal{J}_{\mathcal{B}^{\ast}_{2}} := \{A\cup \{1\}: A\in \mathcal{J}_{\mathcal{B}^{\ast}_{2}(1)}\}$.
Let $\widehat{E} = E\cup \{1\} \in \mathcal{B}^{\ast}_{2}$.

If $\ell = 0$, then $\mathcal{J}_{\mathcal{B}^{\ast}_{2}(1)} = 2^{[k-1]}\setminus \{[k-1]\}$,
and hence $r(\Pi(\mathcal{I}(\widehat{E},\mathcal{B}^{\ast}_{2}))) = k-1$ and $\Pi(\mathcal{I}(\widehat{E},\mathcal{B}^{\ast}_{2}))$
contains all $(k-1)$-subsets of $\widehat{E}$ that contain $1$.
Since $\mathcal{I}(\widehat{E},\mathcal{B}^{\ast}_{2})$ is closed under intersection,
all proper subsets of $\widehat{E}$ that contain $1$ is contained in $\mathcal{I}(\widehat{E},\mathcal{B}^{\ast}_{2})$,
which by definition, implies that $\widehat{E} \in \mathcal{G}$, a contradiction.
Therefore, $\ell \ge 1$.

Let $E_i \in \mathcal{G}$ such that $E_i \cap \widehat{E} = \widehat{E}\setminus\{v_i\}$ for $1\le i \le \ell$.
For every proper subset $S\subset \widehat{E}$ with $1\in S$,
if $v_i \not\in S$ for some $1\le i \le \ell$, then $S\subset E_i$,
which, by the definition of $\mathcal{G}$,
means that $S$ is the center of some $\Delta$-system of size $s$ in $\mathcal{H}$.
If $\{v_1,\ldots, v_{\ell}\} \subset S$, then $S \in \mathcal{I}(\widehat{E},\mathcal{B}^{\ast}_{2})$
and hence $S$ is the center of some $\Delta$-system of size $s$ in $\mathcal{B}^{\ast}_2$.
Therefore, every proper subset $S\subset \widehat{E}$ with $1\in S$
is the center of some $\Delta$-system of size $s$ in $\mathcal{H}$,
which by definition, implies that $\widehat{E} \in \mathcal{G}$, a contradiction.
\end{proof}

Similarly, we obtain
\begin{align}
|\mathcal{G}| + 4|\mathcal{B}_2|  \le \binom{n-1}{k-1}. \notag
\end{align}

Therefore, by the assumption that $|\mathcal{H}| = \binom{n-1}{k-1}$ we obtain
\begin{align}
3\binom{n-1}{k-1}
& \le 3|\mathcal{H}| + |\mathcal{B}_0| + |\mathcal{B}_1| + |\mathcal{B}_2| \notag\\
& = |\mathcal{G}|+ 4|\mathcal{B}_0| +  |\mathcal{G}|+ 4|\mathcal{B}_1|  +  |\mathcal{G}|+ 4|\mathcal{B}_2|
\le 3\binom{n-1}{k-1}, \notag
\end{align}
which implies that $|\mathcal{G}| = \binom{n-1}{k-1}$ and
$\mathcal{B}_0 = \mathcal{B}_1 = \mathcal{B}_2 = \emptyset$.
This completes the proof of Theorem \ref{THM-xd-cluster-free}.
\end{proof}
\bibliographystyle{abbrv}
\bibliography{non_trivial_intersecting}
\end{document}